\newcommand{\bburl}[1]{\textcolor{blue}{\url{#1}}}
\numberwithin{equation}{section}
\newtheorem{thm}{Theorem}[section]
\theoremstyle{plain}
\newtheorem{definition}[thm]{Definition}
\newtheorem{lemma}[thm]{Lemma}
\newtheorem{theorem}[thm]{Theorem}
\newtheorem*{theorem*}{Theorem}
\newtheorem{remark}[thm]{Remark}
\newcommand\be{\begin{equation}}
\newcommand\ee{\end{equation}}
\newcommand\bea{\begin{eqnarray}}
\newcommand\eea{\end{eqnarray}}
\newcommand\bi{\begin{itemize}}
\newcommand\ei{\end{itemize}}
\newcommand\ben{\begin{enumerate}}
\newcommand\een{\end{enumerate}}
\newcommand\bc{\begin{center}}
\newcommand\ec{\end{center}}
\newcommand\ba{\begin{array}}
\newcommand\ea{\end{array}}
\newcommand\frakfamily{\usefont{U}{yfrak}{m}{n}}
\DeclareTextFontCommand{\textfrak}{\frakfamily}
\newcommand{\hr}[1]{\href{#1}{\url{#1}}}
\newcounter{row}
\newcounter{col}
\newcommand\setrow[5]{
  \setcounter{col}{1}
  \foreach \n in {#1, #2, #3, #4, #5} {
    \edef\x{\value{col} - 0.5}
    \edef\y{7.5 - \value{row}}
    \node[anchor=center] at (\x, \y) {\n};
    \stepcounter{col}
  }
  \stepcounter{row}
}
\title{Invariance of the Sprague-Grundy Function for Variants of Wythoff's Game}
\author{Madeleine Weinstein}
\email{\textcolor{blue}{\href{mailto:mweinstein@g.hmc.edu}{mweinstein@hmc.edu}}}
\address{Department of Mathematics, Harvey Mudd College, Claremont, CA 91711 }
\keywords{Combinatorial game theory, Nim, Wythoff, Sprague-Grundy function}
\date{\today}
\begin{document}

\maketitle

\begin{abstract}
We prove three conjectures of Fraenkel and Ho regarding two classes of variants of Wythoff's game. The two classes of variants of Wythoff's game feature restrictions of the diagonal moves. Each conjecture states that the Sprague-Grundy function is invariant up to a certain nim-value for a subset of that class of variant of Wythoff's game. For one class of variants of Wythoff's game, we prove that the invariance of the Sprague-Grundy function extends beyond what was conjectured by Fraenkel and Ho. 
\end{abstract}

\section{Introduction}

In this paper, we prove invariance properties of the Sprague-Grundy function for variants of Wythoff's game. We first state the rule sets of the variants of Wythoff's game. Next, we review background on the Sprague-Grundy function. We also state the invariance properties Fraenkel and Ho \cite{FH} found that lead to the conjectures of further invariance.

\subsection{Rule Sets of Games}
The game of $2$-pile Nim is an impartial game in which two players alternately remove any number of tokens from either of two piles. The game ends when both piles are empty, and the last player able to make a move wins. 
We can conceptualize $2$-pile Nim as being played on a grid of positions marked by coordinates $(a,b)$ where $a$ and $b$ are nonnegative. From a position $(a,b)$, one may move vertically to a position $(a,b-s)$ with $s>0$ or horizontally to a position $(a-s,b)$. In generalizations of Nim, we call such horizontal and vertical moves "Nim moves."
The game of Wythoff allows an additional diagonal move. That is, from a position $(a,b)$, in addition to making a vertical or horizontal Nim move we may move to a position $(a-s,b-s)$. Many variants of Wythoff's game have been studied, with rule sets that either restrict the legal moves of Wythoff's game or allow additional moves. Fraenkel and Ho \cite{FH} looked for games in which the losing positions are translations of the losing positions of Wythoff's game. In the study of this question, they introduced the three classes $ \{W_k\}$, $\{W_{k,l}\}$  and $\{T_k\}$ of variants of Wythoff's game. 
In the class $\{W_k\}_{k \ge 0}$, all Nim moves are allowed but the diagonal move is restricted as follows. A diagonal move from $(a,b)$ to $(a-s,b-s)$ is allowed so long as $\min(a-s,b-s) \ge k$. 
In the class $\{W_{k,l}\}_{0 \leq k \leq l}$, a diagonal move from $(a,b)$ to $(a-s,b-s)$ is allowed so long as $\min(a-s,b-s) \ge k$ and $\max(a-s,b-s) \ge l$. Note that $W_{l,l}$ has the same rule set as $W_l$. 
Lastly, the class $\{T_k\}_{k \ge 0}$  restricts the diagonal moves allowed in $W_1$. Let $a \leq b$. A move from $(a,b)$ to $(a-s,b-s)$ with $a-s>0$ is allowed so long as 
\[ \left|  \left\lfloor \frac{b-s}{a-s} \right\rfloor - \left\lfloor \frac{b}{a}  \right\rfloor   \right| \leq k. \]
We note that $T_{\infty}$ has the same rule set as $W_1$. 

\subsection{Sprague-Grundy Function}

\begin{definition}
The \textbf{nim-value (Sprague-Grundy value)} of a position is defined inductively as follows: The nim-value of all terminal positions (positions from which no move may be made) is 0. The nim-value of any other position $(a,b)$ is the minimum excluded natural number of the set of nim-values of positions reachable in one move from $(a,b)$, that is, the smallest number in the set $\{0,1,2,\dots \}$ that is not the nim-value of some position reachable in one move from $(a,b)$. The \textbf{Sprague-Grundy function} for a game gives the nim-value of a given position. A \textbf{g-position} is a position with nim-value $g$. 
\end{definition}

If a position has nonzero nim-value and the player who makes the next move (the move starting at this position) can win, then the position is called an \textit{N-position}. If a position has a nim-value of $0$, with optimal play only the player who played just previously to that move can win, and the position is called a \textit{P-position}.

Knowledge of the Sprague-Grundy function of an individual combinatorial game extends further than just allowing for determination of a winning strategy for that game: The Sprague-Grundy function of the sum of combinatorial games can be quickly computed from the Sprague-Grundy functions of each of the components.

\subsection{Previous Work}
Wythoff \cite{Wy} found the $P$-positions of Wythoff's game, which involve taking the floor function of a quantity involving the golden ratio. As there is symmetry across the line $y=x$, we list only positions $(a,b)$ with $a\leq b$. 

\begin{definition}
Let $\phi=\frac{1+\sqrt{5}}{2}$, the golden ratio, $A_n=\lfloor \phi n \rfloor$, and $B_n=\lfloor \phi^2 n \rfloor$. 
\end{definition}

\begin{theorem}
(Wythoff  \cite{Wy}) The $P$-positions $(a,b)$ with $a\leq b$ of Wythoff's game form the set $\{ (A_n,B_n) | n \ge 0  \}$. 
\end{theorem}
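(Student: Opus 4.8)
The plan is as follows. I would write $S=\{(A_n,B_n):n\ge 0\}\cup\{(B_n,A_n):n\ge 0\}$ for the symmetrization of the claimed set; since the rules of Wythoff's game are invariant under interchanging the two coordinates, it suffices to prove that $S$ is exactly the set of $P$-positions. I would reduce this to two closure properties via the standard fact that the $P$-positions form the unique set $\mathcal{P}$ of positions such that (i) no legal move from a member of $\mathcal{P}$ ends in $\mathcal{P}$, and (ii) from every position not in $\mathcal{P}$ there is a legal move ending in $\mathcal{P}$. Since every move in Wythoff's game strictly decreases $a+b$, the identification $S=\mathcal{P}$ then follows by strong induction on $a+b$. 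So the whole argument comes down to checking (i) and (ii) for $S$.

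Before that I would record the arithmetic needed about $A_n=\lfloor\phi n\rfloor$ and $B_n=\lfloor\phi^2 n\rfloor$. First, $\phi^2=\phi+1$ gives $B_n=A_n+n$ for all $n$. Second, since $1<\phi<2$, consecutive differences of $(A_n)$ and of $(B_n)$ lie in $\{1,2\}$, so both sequences are strictly increasing. Third --- and this is the one external input I would cite rather than reprove --- Beatty's theorem applied to $\phi$ and $\phi^2$ (legitimate since $\phi$ is irrational and $\tfrac1\phi+\tfrac1{\phi^2}=\tfrac{\phi+1}{\phi^2}=1$) shows that $\{A_n:n\ge 1\}$ and $\{B_n:n\ge 1\}$ partition the positive integers. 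In particular each positive integer is an $A$-value or a $B$-value but not both, and in either case its index is unique; this rigidity drives everything below.

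For property (i): from $(A_n,B_n)$ with $n\ge 1$ (the case $n=0$ is the terminal position $(0,0)$), a horizontal Nim move keeps the second coordinate equal to $B_n$, a vertical Nim move keeps the first equal to $A_n$, and a diagonal move keeps the difference equal to $B_n-A_n=n$. In each case the surviving invariant forces the index to remain $n$ --- using that $B_n$ is a $B$-value only, $A_n$ is an $A$-value only, and the index is pinned by the coordinate difference together with $B_k-A_k=k$ --- so the move must be trivial. The positions $(B_n,A_n)$ are handled by symmetry.

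For property (ii): let $(a,b)\notin S$ be non-terminal; by symmetry assume $a\le b$. If $a$ is a $B$-value, say $a=B_n$, then $A_n<B_n=a\le b$, so a vertical move takes $(a,b)$ to $(B_n,A_n)\in S$. Otherwise $a$ is an $A$-value, say $a=A_n$ (allowing $a=A_0=0$); since $(a,b)\notin S$ we have $b\ne B_n$. If $b>B_n$, a vertical move gives $(A_n,B_n)\in S$. If $b<B_n$, put $d=b-a$; then $A_n\le b<B_n=A_n+n$ forces $d<n$, hence $A_d<A_n=a$, and the diagonal move of length $a-A_d$ sends $(a,b)$ to $(A_d,\,b-(a-A_d))=(A_d,A_d+d)=(A_d,B_d)\in S$. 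This exhausts the possibilities for $a$, so (ii) holds. The step I expect to be the real bottleneck is exactly this last case of (ii): it is the one place where $B_n=A_n+n$ and the monotonicity of the sequences are used in an essential, combined way, and it rests squarely on the complementarity (Beatty) property --- once that is granted, the rest is short bookkeeping.
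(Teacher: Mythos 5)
Your argument is correct, and there is nothing in the paper to compare it against: the paper states this result only as a cited classical theorem of Wythoff, with no proof supplied. What you give is the standard argument --- symmetrize the candidate set, verify the two defining closure properties of the $P$-position set, and induct on $a+b$ --- with the only external input being Beatty's theorem for the complementary sequences $\lfloor \phi n\rfloor$ and $\lfloor \phi^2 n\rfloor$, which you correctly justify via $\tfrac{1}{\phi}+\tfrac{1}{\phi^2}=1$. The two facts you isolate ($B_n=A_n+n$ and the complementarity, which pins the index $n$ from any one of the coordinates or their difference) are exactly what make both closure properties go through, including the genuinely nontrivial subcase $b<B_n$ of property (ii), which you handle correctly by passing to the index $d=b-a<n$. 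The proof is complete as written, modulo the standard (and legitimately citable) facts you flag: Beatty's theorem and the mex/induction characterization of $P$-positions in finite acyclic games.
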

Fraenkel and Ho \cite{FH} found the $P$-positions of $\{W_k\}$, $\{W_{k,l}\}$, and $\{T_k\}$. In fact, the motivation for introducing these games was to answer the question of when translations of $P$-positions of Wythoff's game are $P$-positions. 

\begin{theorem}
(Fraenkel and Ho) For each $k \ge 0$, the $P$-positions $(a,b)$ with $a \leq b$ of $W_k$  form the set 
\[  \{(i,i) | 0 \leq i <k \} \cup \{(A_n+k, B_n+k)  | n \ge 0 \}. \] 
\end{theorem}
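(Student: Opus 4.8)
The plan is to identify the $P$-positions of $W_k$ by the two standard criteria for impartial games. Write $S = \{(i,i) : 0 \le i < k\} \cup \{(A_n+k, B_n+k) : n \ge 0\} \cup \{(B_n+k, A_n+k) : n \ge 0\}$, the claimed set of $P$-positions together with its reflection across $y = x$; note $(0,0) \in S$ and that, by the diagonal symmetry of the rule set, it is enough to show $S$ is exactly the set of all $P$-positions of $W_k$. I will prove (a) no legal move of $W_k$ sends a position of $S$ to a position of $S$, and (b) from every position outside $S$ some legal move of $W_k$ goes into $S$. Since each position dominates only finitely many others, (a) and (b) give, by the usual well-founded induction, that $S$ is the $P$-position set. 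The only external input will be Wythoff's theorem (quoted above), together with the elementary facts that $B_n = A_n + n$, that $A_0 = B_0 = 0$, that $n \mapsto A_n$ and $n \mapsto B_n$ are strictly increasing, and that (Beatty's theorem) $\{A_n\}_{n \ge 1}$ and $\{B_n\}_{n \ge 1}$ partition the positive integers, so that $A_m = B_n$ forces $m = n = 0$.

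For (a) I will split on the type of source position — by symmetry either $(i,i)$ with $0 \le i < k$, or $(A_n+k, B_n+k)$ — and on the type of move. From $(i,i)$ with $i < k$ no diagonal move is legal (the target would have a coordinate $i - s < k$), and a Nim move yields a point with two distinct coordinates $< k$, which is not in $S$. From $(A_n+k, B_n+k)$, a Nim move leaves one coordinate equal to $A_n+k$ or $B_n+k$, each $\ge k$, so the target is not of the form $(i,i)$; equating the unchanged coordinate to that of a shifted-Wythoff point and using injectivity of the Beatty sequences (and disjointness to kill cross-terms $A_m = B_n$) forces the move to be trivial. A legal diagonal move from $(A_n+k, B_n+k)$ has the form $(A_n - s + k, B_n - s + k)$ with $1 \le s \le A_n$, and comparing with $S$ while using $B_n - A_n = n$ again forces $s = 0$. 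This step is pure bookkeeping, but it has the most cases and will be the fussiest to write cleanly.

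For (b), let $(a,b) \notin S$ with $a \le b$ after reflecting. If $a < k$ then $a < b$ (else $(a,a) \in S$), and the vertical Nim move $(a,b) \to (a,a)$ lands in $S$. If $a = b$ then $a > k$ (since $(k,k) \in S$), and the diagonal move $(a,a) \to (k,k)$, with $s = a - k$, is legal because its minimum coordinate is exactly $k$, and $(k,k) \in S$. The substantive case is $k \le a < b$: pass to the Wythoff position $(a-k, b-k)$, which is not a Wythoff $P$-position (otherwise it is $(A_n, B_n)$ for some $n$ and $(a,b) \in S$); apply Wythoff's theorem to get a Wythoff move from it to a Wythoff $P$-position, and push that move forward to $W_k$. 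A Nim move pushes forward to a legal $W_k$ Nim move automatically; the case to check is a diagonal Wythoff move to $(a-k-s, b-k-s)$ with $a-k-s \ge 0$, which pushes forward to $(a-s, b-s)$, whose minimum coordinate $a - s = (a-k-s) + k \ge k$, so the $W_k$ diagonal restriction is satisfied — and in every case the target is the Wythoff $P$-position translated by $(k,k)$, hence in $S$.

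The conceptual point making (b) work is that the diagonal restriction of $W_k$ — forbidding a diagonal step to a position with a coordinate below $k$ — is exactly the translate by $(k,k)$ of the trivial restriction in Wythoff forbidding negative coordinates, so diagonal play transports faithfully between the two games on $\{a, b \ge k\}$, while the extra $P$-positions $(i,i)$ with $i < k$ are there to absorb the positions the translated Wythoff strategy can no longer reach. I expect no genuine difficulty in (b) once this picture is set up; the main obstacle is simply the exhaustive case-check in (a), where one must also keep track of the degenerate index $n = 0$ (with $A_n = B_n = 0$) so that it does not produce spurious coincidences.
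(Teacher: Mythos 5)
Your proposal is correct. Note that the paper does not actually prove this theorem --- it is quoted as background from Fraenkel and Ho \cite{FH} --- so there is no in-paper proof to compare against; but your argument is sound and follows exactly the characterization method the paper itself uses for its Theorem \ref{lodd} (no internal moves within the candidate set, plus a move into the set from every position outside it, closed off by well-founded induction). The key idea, that the diagonal restriction $\min(a-s,b-s)\ge k$ of $W_k$ is precisely the translate by $(k,k)$ of Wythoff's nonnegativity constraint, so that Wythoff's winning moves on $\{a,b\ge k\}$ push forward verbatim to legal $W_k$ moves landing in the translated $P$-set, is the right one, and the extra positions $(i,i)$ with $i<k$ are correctly handled as absorbing states for the low strip. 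The case analysis in your step (a) checks out, including the degenerate index $n=0$ (where $A_0=B_0=0$ and the only potential coincidence $A_n=B_m$ is killed by Beatty disjointness together with the $n=m=0$ check), and step (b) covers all three regimes $a<k$, $a=b\ge k$, and $k\le a<b$.
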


\begin{theorem}\label{ppositionswkl}
(Fraenkel and Ho) Let $k$ and $l$ be nonnegative integers with $k \leq l$. The $P$-positions $(a,b)$ with $ a\leq b$ of $W_{k,l}$ form the set 
\[  \{(i,i) | 0 \leq i <l \} \cup \{(A_n+l, B_n+l)  | n \ge 0 \}. \]
\end{theorem}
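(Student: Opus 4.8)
The plan is to show that the set displayed in the theorem, together with its mirror image across the line $y=x$,
\[ \mathcal{P} \;=\; \{(i,i) : 0 \le i < l\} \;\cup\; \{(A_n+l,\,B_n+l) : n \ge 0\} \;\cup\; \{(B_n+l,\,A_n+l) : n \ge 0\}, \]
is exactly the set of $P$-positions of $W_{k,l}$. (The game rules are symmetric in the two coordinates, so it suffices to describe the $P$-positions with $a\le b$.) Since every move of $W_{k,l}$ strictly decreases the total $a+b$, the game is finite, backward induction applies, and it is enough to verify the two characterizing properties of the $P$-position set: (a) no legal move from a position of $\mathcal{P}$ lands in $\mathcal{P}$; and (b) from every position not in $\mathcal{P}$ there is a legal move into $\mathcal{P}$. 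Throughout I would use the standard arithmetic of the Wythoff pair: $A_0=B_0=0$, $B_n=A_n+n$ (so distinct indices give distinct coordinate differences $B_n-A_n=n$), and $\{A_n\}_{n\ge1}$, $\{B_n\}_{n\ge1}$ partition the positive integers.

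Property (a) is a finite check over the type of starting position and the type of move. No diagonal move is legal from $(i,i)$ with $i<l$ or from $(l,l)$, since it would need a resulting coordinate $\ge l$ produced by a positive decrease from a value $\le l$. A Nim move out of a position of $\mathcal{P}$ with two equal coordinates produces a position whose coordinates are distinct and whose smaller one is $<l$, hence neither a diagonal element of $\mathcal{P}$ (those have equal coordinates) nor a translated Wythoff element (those have both coordinates $\ge l$). From $(A_n+l,B_n+l)$ with $n\ge1$, a diagonal move preserves the coordinate difference $n$, so a target in $\mathcal{P}$ would have to be $(A_n+l,B_n+l)$ itself and the move would be trivial; and a Nim move out of $(A_n+l,B_n+l)$ leaves one coordinate equal to $A_n+l$ or to $B_n+l$, and matching the reordered result against $\mathcal{P}$ forces one of: an equality between an $A$-value and a $B$-value of positive index, a coordinate $\ge l$ equal to a coordinate $<l$, or no actual decrease --- all impossible.

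Property (b) carries the real content, and the idea is to reduce to ordinary Wythoff's game via the translation $(a,b)\mapsto(a-l,b-l)$. Let $(a,b)\notin\mathcal{P}$ with $a\le b$. If $a=b$, then $a>l$ (as $(l,l)\in\mathcal{P}$), and the diagonal move by $s=a-l$ reaches $(l,l)\in\mathcal{P}$; it is legal because both resulting coordinates equal $l\ge k$. If $a<b$ and $a<l$, the Nim move lowering $b$ to $a$ reaches $(a,a)\in\mathcal{P}$. Otherwise $a<b$ and $a\ge l$: put $a'=a-l\ge0$ and $b'=b-l>a'$; then $(a',b')\ne(A_n,B_n)$ for all $n$, so by Wythoff's theorem $(a',b')$ is an $N$-position of ordinary Wythoff's game and admits a winning move to some Wythoff $P$-position. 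Performing the same coordinate decrease from $(a,b)=(a'+l,b'+l)$ reaches the $(l,l)$-translate of a Wythoff $P$-position, hence a point of $\mathcal{P}$; if that move was a Nim move it is automatically legal in $W_{k,l}$, and if it was a diagonal move it remains legal because the resulting position $(A_n+l,B_n+l)$ satisfies $\min(A_n+l,B_n+l)=A_n+l\ge l\ge k$ and $\max(A_n+l,B_n+l)=B_n+l\ge l$.

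The step I expect to be most delicate is (b), and within it the check that the translated winning Wythoff move survives the restriction $\min\ge k$, $\max\ge l$; this is the only place the hypothesis $k\le l$ is genuinely needed, and it explains why the $P$-positions depend on $l$ but not on $k$. I do not anticipate a real obstruction beyond bookkeeping: shifting by $(l,l)$ converts $W_{k,l}$-moves into Wythoff-moves and back almost verbatim, the extra diagonal restrictions only ever deleting moves that were harmless anyway, so everything reduces to Wythoff's theorem together with the Beatty-sequence identities above.
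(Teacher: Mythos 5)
Your proof is correct. Note that the paper itself gives no proof of this statement --- it is quoted from Fraenkel and Ho \cite{FH} --- but your argument is sound: the verification that no move connects two members of $\mathcal{P}$ and that every position outside $\mathcal{P}$ has a legal move into it (splitting on $a=b>l$, $a<\min(b,l)$, and the translated-Wythoff case $l\le a<b$, where the diagonal restriction $\min\ge k$, $\max\ge l$ is automatically satisfied by $(A_n+l,B_n+l)$ since $k\le l$) is exactly the standard $P$-position check, and it matches the template (a)/(b)/(c) that the paper itself uses in the proof of Theorem \ref{lodd}.
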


Note that the $P$-positions of $W_{k,l}$ are independent of $k$, and equal the $P$-positions of $W_l$. 

\begin{theorem}
(Fraenkel and Ho) For each $k \ge 0$, the $P$-positions $(a,b)$ with $a \leq b$ of the game $T_k$ form the set 
\[ \{(0,0)\} \cup \{ (A_n+1, B_n+1) | n \ge 0 \}. \]
\end{theorem}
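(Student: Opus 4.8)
\section*{Proof proposal}

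The plan is to verify that the set $\mathcal{P} := \{(0,0)\} \cup \{(A_n+1,B_n+1) : n\ge 0\}$ has the two properties that characterize the $P$-positions of $T_k$: (i) no legal $T_k$-move connects two positions of $\mathcal{P}$; and (ii) from every position outside $\mathcal{P}$, some legal $T_k$-move reaches $\mathcal{P}$ (the game $T_k$ is well-founded, since $a+b$ strictly decreases along every move, so these two properties determine the $P$-position set uniquely). The argument rests on two facts already available: that $T_k$ is obtained from $W_1$ by forbidding certain diagonal moves, so every legal $T_k$-move is a legal $W_1$-move while the two games share all Nim moves; and that the Fraenkel--Ho theorem for $W_k$, specialized to $k=1$, says that $\mathcal{P}$ is exactly the $P$-position set of $W_1$. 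Property (i) is then immediate: since no $W_1$-move joins two members of $\mathcal{P}$, a fortiori no $T_k$-move does.

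For (ii), fix $(a,b)\notin\mathcal{P}$ with $a\le b$. Because $\mathcal{P}$ is the $P$-position set of $W_1$, there is a legal $W_1$-move from $(a,b)$ into $\mathcal{P}$. If it is a horizontal or vertical Nim move, it is also legal in $T_k$ and we are done. Otherwise it is a diagonal move $(a,b)\to(a-s,b-s)$ with $s\ge 1$; since legal diagonal moves of $W_1$ require $\min(a-s,b-s)\ge 1$, the target cannot be $(0,0)$, so it equals $(A_n+1,B_n+1)$ for some $n\ge 0$ and $(a,b)=(A_n+1+s,\,B_n+1+s)$. It remains to check that this particular move obeys the $T_k$-rule, namely $a-s>0$ and $\bigl|\lfloor (b-s)/(a-s)\rfloor-\lfloor b/a\rfloor\bigr|\le k$.

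The only arithmetic needed is $B_n=A_n+n$ together with $A_n=\lfloor\phi n\rfloor\ge n$. These give $a-s=A_n+1\ge 1>0$, while $(b-s)/(a-s)=(B_n+1)/(A_n+1)=1+n/(A_n+1)\in[1,2)$ and $b/a=(B_n+1+s)/(A_n+1+s)=1+n/(A_n+1+s)\in[1,2)$, the latter because $A_n+1+s>A_n\ge n$. Hence both floors equal $1$, so the left side of the $T_k$-condition is $|1-1|=0\le k$ for every $k\ge 0$, and the diagonal move is legal in $T_k$. This establishes (ii), completes the proof, and explains why the $P$-positions of $T_k$ are independent of $k$.

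I do not anticipate a genuine obstacle: the reduction to $W_1$ settles (i) outright, and the content of (ii) is the short Beatty-sequence estimate showing that every diagonal ``exit'' into $\mathcal{P}$ has ratio-floor $1$ at both endpoints and so survives every restriction $T_k$; the small cases (the value $n=0$, where the exit is $(s,s)\to(1,1)$, and positions on an axis, which reach $(0,0)$ by a Nim move) are absorbed by the same estimate. A self-contained alternative would avoid invoking the $W_1$ theorem by inducting on $a+b$, proving (i) and (ii) simultaneously and using Wythoff's complementary-sequence identities to exhibit the required Nim or diagonal move explicitly for each $(a,b)\notin\mathcal{P}$; in that version the case analysis locating the move would be the main effort, but the ratio computation above shows the $T_k$ restriction never destroys the diagonal option, so the two approaches agree.
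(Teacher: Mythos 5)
The paper does not prove this statement; it is quoted from Fraenkel and Ho \cite{FH} as background, so there is no internal proof to compare against. Your argument is correct and stands on its own as a derivation from the $W_1$ case of the $W_k$ theorem (which the paper also quotes): the characterization of $P$-positions via properties (i) and (ii) is valid since $a+b$ decreases along every move; (i) follows because every $T_k$-move is a $W_1$-move (the $T_k$ diagonal rule includes $a-s>0$, i.e.\ $\min(a-s,b-s)\ge 1$); and for (ii) the only point of substance is that a $W_1$-diagonal exit from $(A_n+1+s,\,B_n+1+s)$ to $(A_n+1,\,B_n+1)$ satisfies the $T_k$ floor condition. Your computation $\lfloor (B_n+1)/(A_n+1)\rfloor = \lfloor 1+n/(A_n+1)\rfloor = 1 = \lfloor (B_n+1+s)/(A_n+1+s)\rfloor$, using $B_n=A_n+n$ and $A_n\ge n$, is right, so the difference of floors is $0\le k$ for every $k$, and the move survives every restriction. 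The symmetric half ($a>b$) is handled by the symmetry of the rule set, as the paper itself does elsewhere. This is exactly the kind of reduction the paper's later Theorem 3.5 exploits in the graded ($g$-position) setting, so your proof is consistent in spirit with the paper even though the paper itself defers this particular statement to the reference.
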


Note that in both $W_{k,l}$ and $T_k$, the $P$-positions are independent of $k$. Fraenkel and Ho conjectured further invariance of the Sprague-Grundy functions of games within the class $\{W_{k,l}\}$ for different $k$ and of those within the class $\{T_k\}$. Note that the $P$-positions of $T_k$ equal those of $W_1$, and the rule set of $T_k$ restricts the diagonal moves allowed in $W_1$. This leads to conjectures about further invariance of the Sprague-Grundy function between the games $T_k$ and $W_1$. In each case, invariance holds for $g$-positions up to a certain bound depending on the parameters of the game. We state and prove such conjectures, as well as invariance properties of the Sprague-Grundy function of $W_{k,l}$ beyond what was conjectured by Fraenkel and Ho.

\section{The Class $W_{k,l}$}

Unlike in the abovementioned variants of Wythoff's game, in the game of Nim it is easy to compute the nim-value of a position without recursion. The operation that finds the nim-value of a given position is called the \textit{nim sum}. Our proof of the invariance property of the Sprague-Grundy function of the games $\{W_{k,l}\}$ relies upon considering regions in which the nim-values of a position is just the nim sum of its coordinates. 

\begin{definition}
The \textit{nim sum} $x \oplus y$ of a position $(x,y)$ is the binary digital sum of $x$ and $y$, that is, the sum when both numbers are written in binary and then added without carrying. Equivalently, it is the "exclusive or" or XOR of $x$ and $y$. 
\end{definition}

\begin{definition}
The forbidden region of a given game of the form $W_k$ or $W_{k,l}$ is the part of the grid that cannot be entered on a diagonal move. 
\end{definition}

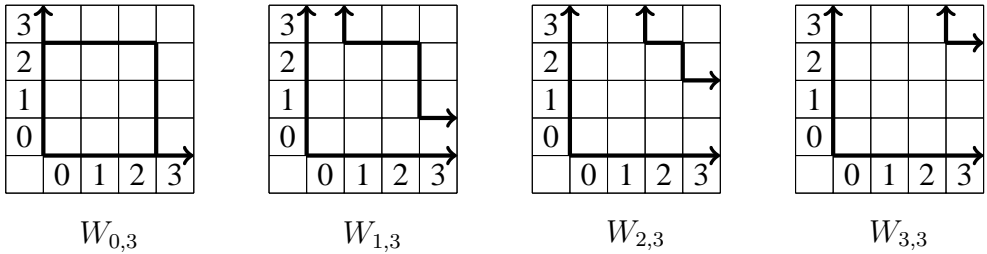
\begin{figure}[h!]
\caption{Forbidden Regions of $W_{k,3}$ for $k=0,1,2,3$}
\begin{tikzpicture}[scale=.5]

  \begin{scope}
    \draw (0, 0) grid (5, 5);
    \draw[ultra thin] (0, 0) grid (5, 5);
	\draw[ultra thick,->] (1,1) -- (5,1);
	\draw[ultra thick,->] (1,1) -- (1,5);
	\draw[ultra thick](1,4) --(4,4);
	\draw[ultra thick](4,1) --(4,4);
    \setcounter{row}{3}
    \setrow {3}{}{ }{}{ }
    \setrow {2}{}{ }{}{ }
    \setrow {1}{}{ }{}{ }
    \setrow {0}{}{ }{}{ }
    \setrow { }{0}{1}{2}{3}

    \node[anchor=center] at (8, -2) {};
    \node[anchor=center,below=0.1cm] at (2.75,-0.25){$W_{0,3}$};
  \end{scope}

    \begin{scope}[xshift=7cm]
    \draw (0, 0) grid (5, 5);
    \draw[ultra thin] (0, 0) grid (5, 5);
	\draw[ultra thick,->] (1,1) -- (5,1);
	\draw[ultra thick,->] (1,1) -- (1,5);
    \draw[ultra thick,->] (2,4) -- (2,5);
    \draw[ultra thick,->] (4,2) -- (5,2);
    
	\draw[ultra thick](2,4) --(4,4);
	\draw[ultra thick](4,2) --(4,4);
    \setcounter{row}{3}
    \setrow {3}{}{ }{}{ }
    \setrow {2}{}{ }{}{ }
    \setrow {1}{}{ }{}{ }
    \setrow {0}{}{ }{}{ }
    \setrow { }{0}{1}{2}{3}
    \node[anchor=center] at (8, -2) {};
    \node[anchor=center,below=0.1cm] at (2.75,-0.25)
   {$W_{1,3}$};

  \end{scope}

    \begin{scope}[xshift=14cm]
    \draw (0, 0) grid (5, 5);
    \draw[ultra thin] (0, 0) grid (5, 5);
	\draw[ultra thick,->] (1,1) -- (5,1);
	\draw[ultra thick,->] (1,1) -- (1,5);
    \draw[ultra thick,->] (3,4) -- (3,5);
    \draw[ultra thick,->] (4,3) -- (5,3);
    
	\draw[ultra thick](3,4) --(4,4);
	\draw[ultra thick](4,3) --(4,4);
    \setcounter{row}{3}
    \setrow {3}{}{ }{}{ }
    \setrow {2}{}{ }{}{ }
    \setrow {1}{}{ }{}{ }
    \setrow {0}{}{ }{}{ }
    \setrow { }{0}{1}{2}{3}
    
    \node[anchor=center] at (8,-2)
 {};
    \node[anchor=center] at (2.75, -1.12) {$W_{2,3}$};
  \end{scope}
  
    \begin{scope}[xshift=21cm]
    \draw (0, 0) grid (5, 5);
    \draw[ultra thin] (0, 0) grid (5, 5);
	\draw[ultra thick,->] (1,1) -- (5,1);
	\draw[ultra thick,->] (1,1) -- (1,5);
    \draw[ultra thick,->] (4,4) -- (4,5);
    \draw[ultra thick,->] (4,4) -- (5,4);
    
    \setcounter{row}{3}
    \setrow {3}{}{ }{}{ }
    \setrow {2}{}{ }{}{ }
    \setrow {1}{}{ }{}{ }
    \setrow {0}{}{ }{}{ }
    \setrow { }{0}{1}{2}{3}
    \node[anchor=center] at (8,-2)
 {};
    \node[anchor=center] at (2.75, -1.12) {$W_{3,3}$};
  \end{scope}

\end{tikzpicture}
\end{figure}

The proof of Theorem \ref{fraenkelho1} and its extensions all rely upon showing that the forbidden regions of two games share a region which contains a $g$-position in every row and column for $g$ up to a certain bound. The presence of the $g$-positions in this region preempts the presence of any $g$-positions in regions that are part of the forbidden region of one game but not the other, thus rendering the differences in the rule sets of the pair of games irrelevant with respect to the location of $g$-positions.

\begin{theorem}\label{fraenkelho1}
(Conjecture 1 of Fraenkel and Ho) Let $k<k^{'} \leq l$. For every integer $g$ in the range $0 \leq g \leq l-k^{'}$, the two games $W_{k,l}$ and $W_{k^{'},l}$ have the same sets of positions with nim-value $g$. 
\end{theorem}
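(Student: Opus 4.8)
The plan is to compare the two games $W_{k,l}$ and $W_{k',l}$ by induction on positions (ordered, say, by $a+b$), showing that for every $g$ with $0 \le g \le l-k'$ the $g$-positions coincide. The only difference between the two rule sets is the set of legal diagonal moves: since $k < k'$, the game $W_{k,l}$ permits strictly more diagonal moves than $W_{k',l}$ — precisely those diagonal moves landing in the strip $S = \{(a,b): k \le \min(a,b) < k',\ \max(a,b) \ge l\}$ (together with their reflections), which is forbidden in $W_{k',l}$ but allowed in $W_{k,l}$. So the key point to establish is that no position in $S$ has nim-value $\le l-k'$ in either game; once that is known, the extra diagonal moves available in $W_{k,l}$ all lead to positions of nim-value $> l-k'$, hence cannot affect which positions have nim-value $\le l-k'$, and the inductive step goes through for both games simultaneously.

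To control the nim-values on the relevant strip, I would first identify a region $R$ inside the common forbidden region of $W_{k,l}$ and $W_{k',l}$ on which the nim-value is simply the nim sum $a \oplus b$ of the coordinates. A position in the forbidden region of a game of type $W_{\cdot,l}$ can only be reached by Nim moves (horizontal/vertical), so within such a region the game locally behaves like $2$-pile Nim, and the standard argument shows the Sprague-Grundy value there equals $a\oplus b$ — provided the region is "downward closed" under Nim moves within the forbidden region, which one checks from the geometry (the forbidden region for parameter $l$ is essentially $\{\min(a,b) < l\}$ minus a corner). Then I use the following elementary fact about XOR: in any single row $a = a_0$ (or column), as $b$ ranges over an interval of length $\ge 2^m$, the values $a_0 \oplus b$ take every residue, and more to the point, $a_0 \oplus b \le l - k'$ forces $b$ to lie in a short initial-type block; for the positions actually sitting in the strip $S$ one has $\max(a,b)\ge l$ while the small coordinate is $< k' \le l$, and a direct estimate gives $a\oplus b \ge l - k'$ — in fact one wants strict inequality except possibly at boundary cases that can be handled separately. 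This is the mechanism by which "invariance holds up to $l-k'$" and not beyond.

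Concretely, the key steps in order are: (1) describe the forbidden regions of $W_{k,l}$ and $W_{k',l}$ and isolate their symmetric difference $S$ (and its reflection); (2) prove the "Nim sum lemma": on the common forbidden region, the Sprague-Grundy function of $W_{k,l}$ (and of $W_{k',l}$) equals $a \oplus b$, by induction using that only Nim moves stay inside; (3) prove the "XOR bound": every position $(a,b) \in S$ satisfies $a \oplus b > l - k'$ — this is the arithmetic heart, using $\min(a,b) < k'$ and $\max(a,b) \ge l$ together with $b \ge \max(a,b) \ge l > l-k' \ge a\oplus b$ would already be contradictory once one shows the high-order bit of $\max(a,b)$ survives the XOR; (4) conclude by the simultaneous induction: for $0 \le g \le l - k'$, a move from a position $p$ with value $\le g$ using an "extra" diagonal move of $W_{k,l}$ lands in $S$, hence by (3) at a position of value $> l-k' \ge g$, so it is irrelevant to computing the mex up to $g$; therefore the sets of $g$-positions agree in the two games.

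The main obstacle I expect is step (3), the XOR bound, and correctly pinning down exactly which positions lie in the common forbidden region so that step (2) applies to all of $S$. The subtlety is that $S$ includes positions with $\max(a,b)$ possibly much larger than $l$, and one must verify both that such positions genuinely sit in the forbidden region of $W_{k',l}$ (so that the Nim sum lemma gives their value) and that the XOR estimate $a\oplus b > l-k'$ is uniform over the whole strip, including the delicate cases where $\max(a,b)$ is just slightly above $l$; here the reflection symmetry across $y=x$ and a careful case split on the binary expansions of $\max(a,b)$ and $l$ will be needed, and getting the bound to be exactly $l-k'$ (matching the conjectured — and, per the abstract, in fact improvable — range) is where the real work lies.
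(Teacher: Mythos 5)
Your overall architecture is sound and close in spirit to the paper's, but your key mechanism differs and contains one imprecision worth fixing. You correctly isolate the symmetric difference $S=\{(a,b): k\le\min(a,b)<k',\ \max(a,b)\ge l\}$ (plus its reflection) and reduce everything to showing that no position of $S$ has nim-value $\le l-k'$. However, $S$ is \emph{not} contained in the common forbidden region: positions with $\min\ge k$ and $\max\ge l$ are precisely the ones diagonally enterable in $W_{k,l}$, and they may also admit legal diagonal moves out of them in $W_{k,l}$, so their $W_{k,l}$-values are not nim sums. Your Nim-sum lemma applies to $S$ only in $W_{k',l}$, whose forbidden region $\{\min<k'\}\cup\{\max<l\}$ does contain $S$ and is closed under all legal moves of that game. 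You half-notice this in your ``obstacles'' paragraph; the fix is to run the simultaneous induction so that the bound $\mathrm{SG}_{W_{k',l}}(q)>l-k'$ for $q\in S$ transfers to $W_{k,l}$ via the induction hypothesis applied to the (strictly smaller) landing positions $q$. With that, your argument closes. Also, your step (3) is far easier than you fear: $a\oplus b\ge|a-b|\ge l-(k'-1)=l-k'+1$ for every $(a,b)\in S$, with no case analysis on binary expansions.

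For comparison, the paper avoids both the value computation on $S$ and the transfer step: it observes that the rectangle $[0,k'-1]\times[0,l-1]$ is a Nim subgame common to both rule sets, and that each of its columns $x$ contains a $g$-position at $(x,\,x\oplus g)$ with $x\oplus g\le l-1$ for every $g\le l-k'$; hence any position of $S$ can reach, by a vertical (or horizontal) Nim move legal in \emph{both} games, a $g$-position of both games, and so is a $g$-position of neither. That reachability argument is symmetric in the two games, which is what makes the conclusion immediate; your XOR lower bound buys essentially the same information, but only for $W_{k',l}$ directly, and needs the inductive transfer to cover $W_{k,l}$.
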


\begin{proof}
The respective forbidden regions of the two games $W_{k,l}$ and $W_{k^{'},l}$ each contain the region 
$[0,k^{'}-1] \times [0,l-1]$
because for any $(x,y)$ in this region, we have $\max(x,y) \leq l$. 
In any rectangle $[0,a] \times [0,b]$, with $0 \leq a \leq b$ of the grid of nim-values of $2$-pile Nim, in every column, there will be a $g-$position for $g \in [0,b-a]$. We can see this as follows. The nim-value of a position $(x,y)$ in 2-pile Nim is given by $x \oplus y$. The definition of $\oplus$ as the XOR operation gives us the two properties that $x \oplus x=0$ and that $x \oplus y \leq x+y$. Suppose $x \oplus y=g$. Taking the nim sum with $x$ on each side gives $y=x \oplus g \leq x+g$. So for $x \in [0,a]$ and $g \in [0,b-a]$, we have $y \leq b$. In particular, for $0 \leq g<l-k^{'}$, the region $[0,k^{'}-1] \times [0,l-1]$ has a $g$-position in every column. 

Consider the rest of the forbidden regions for $W_{k,l}$ and $W_{k^{'},l}$. The part of the forbidden region with $y \ge l$ consists entirely of columns $x$ for $x \in [0,k^{'}-1]$. In no such column can there be any $g$-positions for $0 \leq g \leq l-k{'}$ with $y$-coordinate $y \ge l$ because there is a $g-$position with $y$-coordinate $y \leq l-1$ which is reachable by a vertical Nim move from any position above it in the column $x$. 
Symmetrically, we may argue that the respective forbidden regions of two games $W_{k,l}$ and $W_{k^{'},l}$ each contain the region $[0,l-1] \times [0, k^{'}-1]$ and thus the part of the forbidden regions with $x \ge l$ contains no $g$-positions for $0 \leq g \leq l-k{'}$. 
So for $0 \leq g \leq l-k{'}$, $g$-positions are only located in regions in which positions are either accessible by a diagonal move in both $W_{k,l}$ and $W_{k^{'},l}$ or in neither $W_{k,l}$ nor $W_{k^{'},l}$. Thus the difference in rules between $W_{k,l}$ and $W_{k^{'},l}$ never creates a difference in access to these $g$-positions, so $W_{k,l}$ and $W_{k^{'},l}$ have the same $g$-positions for  $0 \leq g \leq l-k{'}$. 
\end{proof}

\begin{remark}
Note that for general $k< k^{'} \leq l$, the bound for $g$ given in Theorem \ref{fraenkelho1} is tight. For instance, the games $W_{0,2}$ and $W_{1,2}$ have different $2$-positions. For $y=1$, the $2$-position in $W_{0,2}$ is located at $(4,1)$ and the $2$-position in $W_{1,2}$ is located at $(3,1)$. 
\end{remark}

Next, we state an observation about the regularity of the location of $g$-positions less than a given power of two in $2$-pile Nim that allows us to extend Fraenkel and Ho's \cite{FH} conjecture and prove further invariance of the Sprague-Grundy function for the game $W_{k,l}$ in certain circumstances.

\begin{lemma}\label{everyrow}
For $g< 2^j$, the grid $[0,2^j-1] \times [0,2^j-1]$ for the nim-values of 2-pile Nim has a $g$-position in every row and column. 
\end{lemma}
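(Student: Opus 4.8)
The plan is to exhibit, for each fixed $g<2^j$ and each row index $x\in\{0,1,\dots,2^j-1\}$, an explicit position in that row with nim-value $g$, and then to obtain the statement for columns by symmetry. Recall that the nim-value of $(x,y)$ in $2$-pile Nim is $x\oplus y$. The natural candidate in row $x$ is the position $(x,\,x\oplus g)$: its nim-value is $x\oplus(x\oplus g)=(x\oplus x)\oplus g=g$, using associativity of $\oplus$ together with $x\oplus x=0$, both recorded in the proof of Theorem~\ref{fraenkelho1}.

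The only thing left to check is that this position actually lies in the grid, that is, that $x\oplus g\le 2^j-1$. Since $x<2^j$ and $g<2^j$, the binary expansions of both $x$ and $g$ are supported on the bit positions $0,1,\dots,j-1$; because $\oplus$ acts bitwise without carrying, the expansion of $x\oplus g$ is supported on the same positions, so $x\oplus g<2^j$. Hence $(x,\,x\oplus g)$ is a $g$-position in row $x$ of $[0,2^j-1]\times[0,2^j-1]$. For a fixed column index $y\in\{0,1,\dots,2^j-1\}$, the identical reasoning applied to $(y\oplus g,\,y)$ produces a $g$-position in that column; alternatively one reflects the row statement across the line $x=y$, under which $2$-pile Nim and its nim-values are symmetric.

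There is no real obstacle here: beyond the elementary identities $x\oplus x=0$ and associativity of $\oplus$, the single ingredient is the observation that $\oplus$ never increases the number of binary digits, so $x,y<2^j$ forces $x\oplus y<2^j$. It is worth noting that the weaker inequality $x\oplus y\le x+y$ invoked in Theorem~\ref{fraenkelho1} would not by itself keep $x\oplus g$ below $2^j$, which is exactly why the power-of-two hypothesis is used; this is the feature that later lets us push the invariance of the Sprague--Grundy function of $W_{k,l}$ past the bound of Theorem~\ref{fraenkelho1}.
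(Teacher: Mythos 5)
Your proof is correct and is essentially the paper's argument: the paper observes that $k\mapsto i\oplus k$ permutes $\{0,\dots,2^j-1\}$ (since XOR of two numbers below $2^j$ stays below $2^j$ and the map is an involution), so every $g<2^j$ occurs in each row, while you simply exhibit the explicit witness $(x,\,x\oplus g)$ and verify the same containment $x\oplus g<2^j$. The key ingredient --- that $\oplus$ cannot produce a carry past the $j$-th bit --- is identical in both.
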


\begin{proof}
By symmetry across the line $y=x$, we need only to prove the statement for each row. 
Consider the nim-values in the $i^{th}$ row. They are obtained by taking the nim sum of each of the integers $k \in [0,2^j-1]$ with $i$. The nim sum is binary addition without carrying, so adding two numbers less than a given power of two will produce a number less than said power of two; as $0 \leq i,k \leq 2^j-1$, we have $0 \leq i \oplus k  \leq 2^j-1$. Furthermore, the operation of "adding" $i$ to $k$ with the nim sum is an involution. Thus the operation of adding $i$ to $k \in [0,2^j-1]$ simply permutes the set $\{0,1, \dots, 2^j-1\}$. So for $g<2^j$ we have a $g$-position in the $i^{th}$ row, and the lemma is proved. 
\end{proof}

\begin{theorem}\label{powersoftwo}
Let $2^m \leq l$.  For $0 \leq g<2^m$, $0 \leq k \leq l$, the $g$-positions of $W_{k,l}$ equal those of $W_l$.  
\end{theorem}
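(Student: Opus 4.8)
The plan is to run the same ``shared forbidden region'' argument used for Theorem~\ref{fraenkelho1}, but fed Lemma~\ref{everyrow} in place of the cruder bound $x\oplus y\le x+y$. Since $W_l=W_{l,l}$, the games to compare are $W_{k,l}$ and $W_{l,l}$; the forbidden region of $W_{k,l}$ is $\{(x,y):\min(x,y)<k\}\cup\{(x,y):\max(x,y)<l\}$, that of $W_l$ is $\{(x,y):\min(x,y)<l\}$, and the first is contained in the second (as $k\le l$). First I would check that both forbidden regions contain $[0,2^m-1]\times[0,2^m-1]$: a cell there has $\max\le 2^m-1<l$, so it lies in the forbidden region of $W_{k,l}$, and $\min\le 2^m-1<l$, so it lies in that of $W_l$. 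I would then note that no diagonal move can leave this square in either game --- a diagonal move strictly lowers both coordinates, so it can reach neither $\min\ge l$ (required in $W_l$) nor $\max\ge l$ (required in $W_{k,l}$) --- whereas Nim moves stay inside it; hence on $[0,2^m-1]^2$ both games coincide with $2$-pile Nim, and by Lemma~\ref{everyrow} this square contains, for every $g<2^m$, a genuine $g$-position of both games in each of its rows and columns. The same reasoning makes $[0,l-1]^2$ a Nim-like region in both games as well.

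Next I would run a strong induction on cells $(x,y)$, ordered by $x+y$, proving together the two statements: (i) $\mathrm{nim}_{W_{k,l}}$ and $\mathrm{nim}_{W_l}$ agree at every cell where either of them is smaller than $2^m$; and (ii) no cell of the set
\[
 E\ :=\ \{(x,y):\ k\le\min(x,y)\le l-1,\ \max(x,y)\ge l\}
\]
has $\mathrm{nim}_{W_{k,l}}$-value below $2^m$. Here $E$ is exactly the set of cells the two games treat differently: a cell is enterable by a diagonal move in $W_{k,l}$ iff $\min\ge k$ and $\max\ge l$, and in $W_l$ iff $\min\ge l$, so (as $k\le l$) the cells gained in passing from $W_l$ to $W_{k,l}$ are precisely those of $E$, and Nim moves are always legal in both; consequently, from any cell, the moves available in $W_{k,l}$ but not in $W_l$ are exactly the diagonal moves whose target lies in $E$. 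Granting (ii), the inductive step for (i) is routine bookkeeping: those extra targets all have $W_{k,l}$-value $\ge 2^m$ by (ii), so $W_{k,l}$ and $W_l$ reach the same nim-values below $2^m$ from $(x,y)$ (using (i) inductively for the cells reached by the moves common to both games), and two $\mathrm{mex}$ computations whose argument sets agree below $2^m$ yield the same value whenever one of them is below $2^m$. As for (ii): take $(x,y)\in E$ with, say, $x\le y$, so $x\in[k,l-1]$ and $y\ge l$. If $x\le 2^m-1$, a single vertical Nim move lowers $y$ to any value in $[0,2^m-1]$, landing inside $[0,2^m-1]^2$; by Lemma~\ref{everyrow}, column $x$ of that square contains a $g$-position for every $g<2^m$, so $(x,y)$ reaches all of $\{0,1,\dots,2^m-1\}$ and hence $\mathrm{nim}_{W_{k,l}}(x,y)\ge 2^m$.

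The step I expect to be the main obstacle is statement (ii) in the remaining case, where the smaller coordinate of a cell of $E$ lies in $[2^m,\,l-1]$ --- cells that occur only when $l$ is not a power of two. For such a cell $(x,y)$ (with $x\le y$, $x\in[2^m,l-1]$, $y\ge l$) the column $x$ misses the shared square $[0,2^m-1]^2$, so the preempting argument above does not apply as stated, and one must show by other means that $(x,y)$ still reaches every nim-value below $2^m$ in $W_{k,l}$. I would try to handle this either by descending along a chain of moves that stays inside $E$ until the smaller coordinate drops below $2^m$ and then invoking the case already done, or by analysing directly the nim-values of $W_{k,l}$ on the strip $\{(x,y):x\le l-1,\ y\ge l\}$, starting from the Nim-like behaviour on $[0,l-1]^2$ together with the very restricted diagonal moves available in that strip. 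Pinning down this case --- in particular confirming that the restricted diagonal moves there never create a small nim-value not already produced by some Nim move --- is where the genuine work of the proof is concentrated, and it is the part I would examine most sceptically.
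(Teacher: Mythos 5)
Your reduction is the right one, and the part you do carry out coincides with the paper's argument: both forbidden regions contain the square $[0,2^m-1]^2$, that square is pure $2$-pile Nim, Lemma~\ref{everyrow} places a $g$-position for every $g<2^m$ in each of its rows and columns, and these preempt small nim-values at any cell of $E$ whose smaller coordinate is below $2^m$. But the difficulty you flag at the end is a genuine gap, and it cannot be closed in the form you propose, because your statement (ii) is false once $2^m<l$. Take $l=5$, $m=2$, and any $k\le 4$. The cell $(4,5)$ lies in $E$, yet every diagonal move out of it lands at $(4-s,5-s)$ with $\max<5$ and is illegal, so the positions reachable from it in $W_{k,5}$ are the Nim cells $(4,y')$, $y'\le 4$ (values $4,5,6,7,0$) and the cells $(x',5)$, $x'\le 3$ (values $5,4,7,6$); the minimum excluded value of $\{0,4,5,6,7\}$ is $1<2^m$. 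So $E$ does contain positions of small nim-value, the invariant ``no small values in the difference region'' is unavailable, and a completion would have to argue instead that the extra diagonal access to such cells never changes a mex below $2^m$ (for instance, that every position which can reach $(4,5)$ diagonally in $W_{k,5}$ already reaches a $1$-position by a Nim move in both games) --- a different and more delicate induction than the one you set up.

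For what it is worth, the paper's own proof makes exactly the leap you were sceptical of: for a cell $(i,j)$ of the residual region it establishes only $i<l$ and then invokes ``the column of $[0,2^m-1]\times[0,2^m-1]$ below $(i,j)$'', which exists only when $i<2^m$. When $l=2^m$ the two bounds coincide and both your argument and the paper's are complete; when $2^m<l$, the columns $i\in[2^m,l-1]$ are unaccounted for in either. You have correctly located the soft spot, but your proposal does not close it, and statement (ii) in particular must be weakened before the induction can go through.
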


\begin{proof}
Let $g<2^m$. 
We show that the set of $g$-positions in the forbidden regions for each game $W_{k,l}$ is identical to the set of $g$-positions in the forbidden region of $W_l$ for all $g<2^m$. 

Consider the grid $[0,2^m-1] \times [0,2^m-1]$ in the grid of nim-values of $W_{k,l}$. For positions in this grid, no diagonal moves can be made because $\max(i,j)<2^m \leq l$ for all $(i,j)$ in this grid. Thus this grid is the grid of Nim. By Lemma \ref{everyrow}, for $g<2^m$, every row and column of the grid has a $g-$position. 
Consider the rest of the forbidden region for $W_{k,l}$ outside the grid $[0,2^m-1] \times [0,2^m-1]$. For any $(i,j)$ in this region with $j>2^m-1$, we have that $i<l$. The part of the column of $[0,2^m-1] \times [0,2^m-1]$ below $(i,j)$ contains a $g-$position for all $g<2^m$, so $(i,j)$ cannot be a $g-$position for $g<2^m$. Similarly, for any $(i,j)$ in this region with $i>2^m-1$, we have that $j<l$, and in the row of $[0,2^m-1] \times [0,2^m-1]$ to the left of $(i,j)$ there is a $g$-position for all $g<2^m$, so $(i,j)$ cannot be a $g-$position for $g<2^m$. 
Thus there are no $g$-positions for $g<2^m$ in the rest of the forbidden region. 

Therefore, for $0 \leq g<2^m$, $g$-positions are only located in regions in which positions are either accessible by a diagonal move in both $W_{k,l}$ and $W_{l,l}$ or in neither $W_{k,l}$ nor $W_{l,l}$. Thus the difference in rules between $W_{k,l}$ and $W_{l,l}$ never creates a difference in access to these $g$-positions, so $W_{k,l}$ and $W_{l,l}$ have the same $g$-positions for  $0 \leq g<2^m$.

Note that $W_{l,l}$ has an identical set of rules to $W_l$. Thus the set of $g$-positions in each game $W_{k,l}$ is identical to the set of $g$-positions in $W_l$ for all $g<2^m$.

\end{proof}

We have proven the invariance property of the $g$-positions for the games $W_{k,l}$ without actually finding a formula for these positions. In general, it appears to be hard to find an explicit formula for $g$-positions of $W_{k,l}$ with $g \ge 1$. Fraenkel and Ho \cite{FH} provide a recursive formula for the $1$-positions of $W_k$, where the $1$-positions of $W_{k+2}$ are obtained from those of $W_k$. Fraenkel and Ho give an explicit formula for the $1$-positions of $W_1$. Blass and Fraenkel \cite{BF} give a recursive algorithm for computing the $1$-positions of $W_0$ (Wythoff's game), but there does not appear to be an explicit formula in the literature. So while Fraenkel and Ho provide an explicit formula for the $1$-positions of $W_k$ with $k$ odd, no such formula appears to exists for $k$ even. 
Computer explorations indicate that for $l$ even, the set of $1$-positions of $W_{k,l}$ equals that of $W_{k}$ for all $0 \leq k \leq l$, although Theorem \ref{fraenkelho1} only proves this for $0 \leq k <l$. Thus it appears to be hard to find a formula for the $1$-positions of $W_{k,l}$ with $l$ even. But in Theorem $\ref{lodd}$, we are able to provide a formula for the $1$-positions of $W_{k,l}$ with $l$ odd and $k<l$. 

The proof of Theorem \ref{lodd}, the formula for the $1$-positions of $W_{k,l}$ with $l$ odd, will require the four lemmas below. The proofs are omitted as they are elementary and use ideas tangential to the rest of the paper. Lemma's \ref{covering} and \ref{agaps} are used directly as important parts of Theorem \ref{lodd} while Lemma's \ref{fracless} and \ref{fraclesscomplement} are used to prove Lemma \ref{agaps}. 

\begin{lemma}\label{covering}
For $n \ge 1$, let $A_n= \lfloor n \phi \rfloor$ and $B_n= \lfloor n \phi^2 \rfloor$. The following sets partition the set of integers greater than or equal to 2: $\{ A_n | n \in \{B_k\}\} , \{ B_n+1 | n \in \{B_k\}\} , \{A_n+1| n \in \{A_k\}\} , \{B_n+2 | n \in \{A_k\}\} $
\end{lemma}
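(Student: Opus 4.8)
The plan is to exploit the Beatty partition $\{A_k\} \sqcup \{B_k\} = \mathbb{Z}_{\geq 1}$ (with $A_k = \lfloor k\phi\rfloor$, $B_k = \lfloor k\phi^2\rfloor$, $B_k = A_k + k$) together with the elementary fact that for each $n\geq 1$ exactly one of $n$ and $n+1$ is an $A$-value or, put differently, that the gaps in the sequence $\{A_k\}$ are $1$ and $2$, with a gap of $2$ occurring exactly at the $A$-indices and a gap of $1$ at the $B$-indices. Concretely, $A_{k+1}-A_k \in \{1,2\}$, and $A_{k+1}-A_k = 2$ iff $k \in \{A_j\}$ while $A_{k+1}-A_k = 1$ iff $k \in \{B_j\}$; equivalently $B_{k+1}-B_k \in \{2,3\}$, equal to $3$ iff $k\in\{A_j\}$ and $2$ iff $k\in\{B_j\}$. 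First I would record these facts (they are standard consequences of $\phi^2 = \phi+1$ and the three-distance theorem, or can be derived directly from the floor identities).

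Next I would show the four sets are pairwise disjoint. The sets $\{A_n \mid n\in\{B_k\}\}$ and $\{A_n+1\mid n\in\{A_k\}\}$ both consist of "$A$-values or their successors," but an $A$-value $A_m$ equals some $A_n+1$ only when $A_m - A_{m-1} = 1$... one must track indices carefully: $A_n$ with $n\in\{B_k\}$ versus $A_n+1$ with $n\in\{A_k\}$ are disjoint because if $A_{n'} = A_n + 1$ then $n' = n+1$ and the gap $A_{n+1}-A_n = 1$ forces $n\in\{B_k\}$, whereas we are taking $n\in\{A_k\}$ on the second set. Similarly $\{B_n+1\mid n\in\{B_k\}\}$ and $\{B_n+2\mid n\in\{A_k\}\}$ are handled by the gap structure of $\{B_k\}$, and cross-terms between the $A$-family and the $B$-family are separated using the Beatty partition applied after subtracting $0$, $1$, or $2$ and checking which residue class of "$A$ or $B$" the result lands in. This bookkeeping is the part I expect to be the main obstacle — not any single hard idea, but making sure all six pairs of sets are checked without a gap in the case analysis.

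Finally I would prove the four sets cover every integer $\geq 2$. Given $m \geq 2$, by the Beatty partition either $m\in\{A_k\}$ or $m\in\{B_k\}$. If $m = A_n$: look at whether the gap just below, $A_n - A_{n-1}$, is $1$ or $2$. If it is $2$, then $n-1\in\{A_k\}$ so $m-1 = A_{n-1}+1$ lands... rather, I should phrase it as: $m = A_n$ is in the first set iff $n\in\{B_k\}$; if instead $n\in\{A_k\}$, then I claim $m$ is a successor, namely $m = B_{n'}+1$ or $B_{n'}+2$ for an appropriate $n'$, using $B_{n'} = A_{n'} + n'$ and the identity relating $A_n$ for $n$ an $A$-value back to the $B$-sequence. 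The cleanest route is probably the substitution $n = A_j$ (resp. $n = B_j$) and simplifying $A_{A_j}$ and $A_{B_j}$ via $A_{B_j} = A_j + B_j = B_j + A_j$ and $A_{A_j} = A_j + B_j - j$ type identities (consequences of $B_k = A_k+k$ and $A_{B_k} = A_k + B_k$), which turn each of the four sets into an explicit arithmetic description and make both disjointness and covering a finite check on residues. Lemmas \ref{fracless} and \ref{fraclesscomplement}, cited as the tools for the downstream Lemma \ref{agaps}, are presumably the statements about when $\{k\phi\}$ (fractional part) is less than a given threshold, and I would use their analogues here to pin down exactly which gap size occurs at each index.
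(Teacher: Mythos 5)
The paper gives no proof of this lemma to compare against: it explicitly states that the proofs of Lemmas \ref{covering}--\ref{agaps} ``are omitted as they are elementary and use ideas tangential to the rest of the paper.'' So your proposal can only be judged on its own terms. The overall strategy is the right one: the Beatty partition $\{A_k\}\sqcup\{B_k\}=\mathbb{Z}_{\ge 1}$, the gap structure ($A_{k+1}-A_k=2$ iff $k\in\{A_j\}$, $B_{k+1}-B_k=3$ iff $k\in\{A_j\}$), and the composition identities do reduce everything to bookkeeping, and the lemma is true. But two of the facts you write down are wrong as stated, and the bookkeeping you defer is exactly the content of the lemma. First, ``for each $n\ge 1$ exactly one of $n$ and $n+1$ is an $A$-value'' is false ($3$ and $4$ are both $A$-values); your subsequent reformulation via gap sizes is the correct one, and is what you actually use. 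Second, the identity ``$A_{A_j}=A_j+B_j-j$'' is false: since $B_j=A_j+j$ this would say $A_{A_j}=2A_j$, whereas in fact $A_{A_j}=B_j-1=A_j+j-1$ (check $A_{A_2}=A_3=4=B_2-1$). Since your plan is to ``turn each of the four sets into an explicit arithmetic description,'' getting these identities right is not optional; the correct list is $A_{A_j}=B_j-1$, $A_{B_j}=A_j+B_j$, $B_{A_j}=A_j+B_j-1$, $B_{B_j}=A_j+2B_j$.

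With the corrected identities the argument closes cleanly, and you should carry it out rather than leave it as ``a finite check on residues.'' The four sets become $\{A_{B_j}\}$, $\{B_{B_j}+1\}$, $\{A_{A_j}+1\}=\{B_j\}$, and $\{B_{A_j}+2\}=\{A_{B_j}+1\}$. For covering, take $m\ge 2$: if $m\in\{B_k\}$ it lies in the third set; if $m=A_i$ with $i\in\{B_k\}$ it lies in the first; otherwise $m=A_{A_k}=B_k-1$ with $k\ge 2$, and the gap $B_k-B_{k-1}\in\{2,3\}$ puts $m=B_{B_j}+1$ in the second set when $k-1=B_j$, and $m=B_{A_j}+2$ in the fourth when $k-1=A_j$. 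Disjointness: the third set consists of $B$-values while the other three consist of $A$-values ($A_{B_j}+1=A_{B_j+1}$ by the gap rule, and $B_{B_j}+1$ is an $A$-value because no two consecutive integers are both $B$-values), and the three $A$-indexed sets have pairwise disjoint index sets by the same gap considerations you sketch. So the route is viable, but as written the proposal contains a false identity at its computational core and stops short of the case analysis that constitutes the proof.
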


\begin{lemma}\label{fracless}
For all integers $k \ge 0$, we have $\{ \phi \lfloor k \phi^2 \rfloor \} < 2- \phi$. 
\end{lemma}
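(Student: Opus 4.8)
The statement asserts that for every integer $k \ge 0$, the fractional part $\{\phi \lfloor k\phi^2\rfloor\}$ is less than $2-\phi$. I would begin by writing $\lfloor k\phi^2\rfloor = k\phi^2 - \{k\phi^2\}$ and using the golden ratio identity $\phi^2 = \phi+1$, so that $\phi\lfloor k\phi^2\rfloor = \phi(k\phi^2 - \{k\phi^2\}) = k\phi^3 - \phi\{k\phi^2\}$. Since $\phi^3 = \phi\cdot\phi^2 = \phi(\phi+1) = \phi^2+\phi = 2\phi+1$ is of the form (integer) $+ 2\phi$ up to... more carefully, $k\phi^3 = 2k\phi + k$, and $2k\phi$ has the same fractional part as $2\{k\phi\}$ adjusted by an integer. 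The cleaner route is to track everything modulo $1$: we have $\phi\lfloor k\phi^2\rfloor \equiv -\phi\{k\phi^2\} \pmod 1$ after absorbing the integer $k\phi^3 - 2k\phi - k = 0$ and noting $2k\phi \equiv 2\{k\phi\} \pmod 1$ only up to further reduction. So the first key step is to reduce the claim to a statement purely about $\{k\phi\}$ and $\{k\phi^2\}$.

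The second step is to invoke the Beatty/three-distance structure of the sequences $A_k = \lfloor k\phi\rfloor$ and $B_k = \lfloor k\phi^2\rfloor$. Because $\phi$ and $\phi^2$ are the Beatty parameters of a complementary pair, the fractional parts $\{k\phi^2\}$ are known to lie in a restricted subinterval — specifically, using $\phi^2 = \phi+1$ we get $\{k\phi^2\} = \{k\phi\}$, so $\lfloor k\phi^2\rfloor = k + \lfloor k\phi\rfloor = k + A_k$. Substituting, $\phi\lfloor k\phi^2\rfloor = \phi(k+A_k) = \phi k + \phi A_k$, and $\{\phi k + \phi A_k\} = \{\{k\phi\} + \{A_k\phi\}\}$ up to an integer. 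Now the relevant fact is that $\{A_k\phi\}$, the fractional part of $\phi$ times a Beatty number $\lfloor k\phi\rfloor$, is constrained: one shows $A_k\phi = \lfloor k\phi\rfloor\phi$ and uses that $\lfloor k\phi\rfloor = k\phi - \{k\phi\}$ again, giving $\{A_k\phi\} \equiv -\phi\{k\phi\} \pmod 1$. Iterating this reduction once more and using $0 \le \{k\phi\} < 1$ together with the explicit value $2-\phi = \phi^{-2} \approx 0.382$ should pin the fractional part into $[0, 2-\phi)$; the arithmetic here is elementary but must be done carefully to avoid off-by-an-integer errors.

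The main obstacle, as I see it, is managing the nested fractional-part reductions without losing track of which integer is being subtracted, and in particular verifying that the resulting quantity never reaches or exceeds the threshold $2-\phi$ — i.e., ruling out the boundary case. Since $\phi$ is irrational, $\{\phi\lfloor k\phi^2\rfloor\}$ is never exactly $2-\phi$ unless some exact identity forces it, so the real content is the strict upper bound. I expect this comes down to showing $\{k\phi\} \in [0,1)$ always maps, under the affine-mod-$1$ map $x \mapsto \{-\phi x\}$ or its square, into $[0, \phi^{-2})$, which follows from $\phi\cdot\phi^{-2}$ considerations and the fact that $\{k\phi\}$ itself already avoids the top $\phi^{-2}$-length subinterval when $k$ ranges over values of the form $\lfloor j\phi^2\rfloor$. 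I would close the argument by combining this interval containment with the identity from the first step. I note the paper itself omits this proof as "elementary," so a short, self-contained version along these lines should suffice.
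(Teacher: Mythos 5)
The paper explicitly omits its own proof of this lemma, so there is nothing to compare against; judged on its own, your proposal has the right strategy but does not close, and the two intermediate congruences you commit to are both false. Specifically, $\phi\lfloor k\phi^2\rfloor \equiv -\phi\{k\phi^2\} \pmod 1$ and $\{A_k\phi\} \equiv -\phi\{k\phi\} \pmod 1$ each drop the contribution of the non-integer term you are ``absorbing'' ($k\phi^3 = 2k\phi + k$ in the first case, $k\phi^2 = k\phi + k$ in the second), and that dropped term is exactly what changes the answer. The computation you are circling does work if carried to the end: with $\alpha = \{k\phi\}$ and $A_k = k\phi - \alpha$, using $\lfloor k\phi^2\rfloor = k + A_k$ and $\phi^2 = \phi+1$,
\[
\phi\lfloor k\phi^2\rfloor \;=\; \phi k + \phi A_k \;=\; 2k\phi + k - \phi\alpha \;=\; (2A_k + k) + (2-\phi)\alpha ,
\]
where $2A_k+k$ is an integer and $0 \le (2-\phi)\alpha < 2-\phi < 1$, so $\{\phi\lfloor k\phi^2\rfloor\} = (2-\phi)\{k\phi\} < 2-\phi$ (the case $k=0$ giving $0$). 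Your write-up never reaches this identity, and as stated the reductions would give the wrong multiple of $\alpha$.

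A second, smaller problem is the closing appeal to ``the fact that $\{k\phi\}$ itself already avoids the top $\phi^{-2}$-length subinterval when $k$ ranges over values of the form $\lfloor j\phi^2\rfloor$.'' In Lemma \ref{fracless} the index $k$ is an arbitrary nonnegative integer, so no such restriction is available, and none is needed: the identity above gives the strict bound for every $k$ simply because $\{k\phi\} < 1$. (A restricted-interval statement of the kind you describe is what one would want for the complementary inequality in Lemma \ref{fraclesscomplement}, where the same expansion gives $\{\phi A_k\} = 1 - (\phi-1)\{k\phi\} \ge 2-\phi$; conflating the two lemmas appears to be the source of the confusion.) So the proposal identifies the correct elementary route but, as written, contains incorrect steps and does not constitute a proof.
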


\begin{lemma}\label{fraclesscomplement}
For all integers $k>0$, we have $\{ \phi \lfloor k \phi  \rfloor \} \ge  2- \phi$. 
\end{lemma}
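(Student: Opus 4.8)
The plan is to reduce the statement to a single exact identity for the fractional part, exploiting $\phi^2=\phi+1$. Write $A_k$ for $\lfloor k\phi\rfloor$, so that $k\phi = A_k + \{k\phi\}$ with $\{k\phi\}\in(0,1)$; the fractional part is strictly positive because $\phi$ is irrational and $k\ge 1$. Multiplying $A_k = k\phi - \{k\phi\}$ by $\phi$ and using $\phi^2=\phi+1$ gives
\[
\phi A_k \;=\; k\phi^2 - \phi\{k\phi\} \;=\; k\phi + k - \phi\{k\phi\}.
\]
Substituting $k\phi = A_k + \{k\phi\}$ once more yields
\[
\phi A_k \;=\; k + A_k + (1-\phi)\{k\phi\},
\]
and since $k+A_k\in\Z$ this shows $\{\phi A_k\} = \{(1-\phi)\{k\phi\}\}$.

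Next I would pin down this fractional part exactly. Since $1-\phi = -1/\phi \in (-1,0)$ and $\{k\phi\}\in(0,1)$, the product $(1-\phi)\{k\phi\}$ lies in $(1-\phi,0)\subset(-1,0)$, so its fractional part is obtained by adding $1$:
\[
\{\phi A_k\} \;=\; 1 + (1-\phi)\{k\phi\}.
\]
Finally, from $\{k\phi\}<1$ and $1-\phi<0$ we get $(1-\phi)\{k\phi\} > 1-\phi$, hence $\{\phi A_k\} > 1 + (1-\phi) = 2-\phi$, which is (slightly) stronger than the asserted $\{\phi\lfloor k\phi\rfloor\}\ge 2-\phi$.

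The computation is short, so there is no genuine obstacle; the only point needing care is the sign bookkeeping in the last two steps — verifying that $(1-\phi)\{k\phi\}$ really does land in $(-1,0)$, so that its fractional part is $1+(1-\phi)\{k\phi\}$ and not the value itself, and recording that $\{k\phi\}>0$ (which is where the hypothesis $k>0$ together with the irrationality of $\phi$ enters). The same strategy proves the companion Lemma~\ref{fracless}: using $\phi^3 = 2\phi+1$ together with $\lfloor k\phi^2\rfloor = \lfloor k\phi\rfloor + k$ one finds $\{\phi\lfloor k\phi^2\rfloor\} = (2-\phi)\{k\phi\} < 2-\phi$.
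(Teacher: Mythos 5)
Your proof is correct; I checked each step. The key identity $\phi A_k = k + A_k + (1-\phi)\{k\phi\}$ follows exactly as you say from $\phi^2=\phi+1$, the product $(1-\phi)\{k\phi\}$ does lie in $(1-\phi,0)\subset(-1,0)$ because $1-\phi=-1/\phi$ and $\{k\phi\}\in(0,1)$ for $k\ge 1$ (irrationality of $\phi$), and the resulting exact formula $\{\phi A_k\}=1+(1-\phi)\{k\phi\}=1-\{k\phi\}/\phi$ immediately gives the strict bound $\{\phi A_k\}>2-\phi$. The paper omits its own proof of this lemma (it is listed among the ``elementary'' lemmas whose proofs are skipped), so there is nothing to compare against; your argument is the natural one and in fact yields slightly more than the stated inequality, and your closing remark correctly handles the companion Lemma~\ref{fracless} by the same computation.
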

 
\begin{lemma}\label{agaps}
We have $\lfloor \phi n \rfloor = \lfloor \phi (n-1) \rfloor + 1$ if and only if $n= \lfloor k \phi^2 \rfloor +1$ for some $k$. 
\end{lemma}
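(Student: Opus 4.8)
The plan is to convert the statement about the gap $\lfloor\phi n\rfloor-\lfloor\phi(n-1)\rfloor$ into a statement about the fractional part $\{\phi(n-1)\}$, and then read off the characterization directly from Lemmas \ref{fracless} and \ref{fraclesscomplement} together with the Beatty partition of the positive integers.

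First I would record the elementary reduction. Writing $\phi(n-1)=\lfloor\phi(n-1)\rfloor+\{\phi(n-1)\}$ gives $\lfloor\phi n\rfloor=\lfloor\phi(n-1)\rfloor+\lfloor\{\phi(n-1)\}+\phi\rfloor$. Since $1<\phi<2$, the quantity $\{\phi(n-1)\}+\phi$ lies in $[\phi,1+\phi)\subset(1,3)$, so $\lfloor\{\phi(n-1)\}+\phi\rfloor$ is $1$ when $\{\phi(n-1)\}+\phi<2$ and is $2$ otherwise. Hence $\lfloor\phi n\rfloor=\lfloor\phi(n-1)\rfloor+1$ if and only if $\{\phi(n-1)\}<2-\phi$ (there is no boundary case, since $\{\phi(n-1)\}=2-\phi$ would force $\phi n\in\Z$, impossible for $n\ge1$). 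Setting $m=n-1$, the lemma is now equivalent to the assertion that $\{\phi m\}<2-\phi$ if and only if $m=\lfloor k\phi^2\rfloor=B_k$ for some $k\ge0$.

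Finally I would dispatch this equivalence using the complementarity of the Beatty sequences $A_k=\lfloor k\phi\rfloor$ and $B_k=\lfloor k\phi^2\rfloor$: because $\tfrac1\phi+\tfrac1{\phi^2}=1$, the sets $\{A_k:k\ge1\}$ and $\{B_k:k\ge1\}$ partition $\Z_{\ge1}$, and $B_0=0$ handles $m=0$ (where $\{0\}=0<2-\phi$). For the reverse implication, Lemma \ref{fracless} says exactly that $\{\phi B_k\}<2-\phi$ for all $k\ge0$. For the forward implication, Lemma \ref{fraclesscomplement} says $\{\phi A_k\}\ge2-\phi$ for all $k\ge1$, so any $m$ with $\{\phi m\}<2-\phi$ is not of the form $A_k$ and, by the partition, must be some $B_k$. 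Undoing $m=n-1$ yields the lemma, with $k=0$ giving the case $n=1$. The only idea here beyond bookkeeping is recognizing that the gap of $(A_n)$ at position $n$ is controlled by whether $\{\phi(n-1)\}$ falls below the threshold $2-\phi$ — precisely the threshold appearing in Lemmas \ref{fracless} and \ref{fraclesscomplement} — so I do not anticipate any genuine obstacle.
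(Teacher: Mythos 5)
Your proof is correct, and it follows exactly the route the paper intends: the paper omits the proof of Lemma \ref{agaps} but states that it is derived from Lemmas \ref{fracless} and \ref{fraclesscomplement}, which is precisely how you use them after reducing the gap condition to the threshold $\{\phi(n-1)\}<2-\phi$. No issues.
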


\begin{theorem}\label{lodd}
For $l=2m+1$, $k<l$, the set of $1$-positions $(a,b)$ with $a \leq b$ of $W_{k,l}$ is: $ \{ (2i,2i+1) | 0 \leq i \leq m \} \cup \{ (l+1,l+1) \} \cup \{ (A_n+l, B_n+l+1) | n= \lfloor j \phi^2 \rfloor \text{ for some } j \ge 1 \} \cup \{ (A_n+l+1, B_n+l+2) | n= \lfloor j \phi \rfloor \text{ for some } j \ge 1 \}$. 
\end{theorem}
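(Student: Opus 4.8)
The plan is to verify the claimed set $S$ of $1$-positions by induction on positions (ordered by, say, $a+b$), checking the two defining properties of $P$-positions for the target nim-value: from any position in $S$ every move leads to a position with nim-value $\neq 1$, and from any position not in $S$ there is a move to a position in $S$ (equivalently, of nim-value $1$) unless that position already has nim-value $0$. The geometry here is that of Wythoff with the diagonal move allowed only when the target point $(a-s,b-s)$ satisfies $\min \ge k$ and $\max \ge l$; since $k<l$, the binding constraint near the $1$-positions listed is the $\max \ge l$ one, so for positions with small coordinates the game behaves like Nim, which pins down the initial block $\{(2i,2i+1)\mid 0\le i\le m\}$ together with $(l+1,l+1)$ exactly as one would get from the first $2m+2$ diagonal-adjacent Nim $1$-positions plus the first ``Wythoff-type'' position after the forbidden region opens up. I would first isolate this small-coordinate regime and dispatch it using Lemma~\ref{everyrow} (for $g=1<2$, every row and column of the relevant Nim block has a $1$-position), mirroring the bookkeeping in the proof of Theorem~\ref{powersoftwo}.

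The heart of the argument is the ``tail'' consisting of the two families $\{(A_n+l,B_n+l+1)\mid n=\lfloor j\phi^2\rfloor\}$ and $\{(A_n+l+1,B_n+l+2)\mid n=\lfloor j\phi\rfloor\}$. The idea is that once the diagonal move is fully available (coordinates $\ge l$), a diagonal move from a tail position shifts $(a,b)\mapsto(a-s,b-s)$, and one must show it never lands on another element of $S$ or on a $0$-position; for the Nim moves one uses that $A_n,B_n$ partition the positive integers (Beatty) so the column/row structure is rigid. The two cases in the tail correspond precisely to the dichotomy in Lemma~\ref{agaps}: whether the Beatty sequence $\lfloor\phi n\rfloor$ jumps by $1$ or by $2$ at index $n$, which in turn is governed by whether $n$ is of the form $\lfloor j\phi^2\rfloor+1$. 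Lemma~\ref{covering} is what guarantees that the union of the four index sets $\{A_n\mid n\in\{B_k\}\}$, $\{B_n+1\mid n\in\{B_k\}\}$, $\{A_n+1\mid n\in\{A_k\}\}$, $\{B_n+2\mid n\in\{A_k\}\}$ tiles the integers $\ge 2$, which is exactly the statement that every row $b\ge$ (some bound) contains exactly one tail $1$-position and that the $a$-coordinates never collide — this is how I would verify the ``every non-$P$-position has a move to a $1$-position'' direction, row by row, after using Lemmas~\ref{fracless} and~\ref{fraclesscomplement} through~\ref{agaps} to control the exact offsets.

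Concretely the steps, in order, are: (1) reduce to $a\le b$ by the $y=x$ symmetry; (2) confirm $S$ contains no two positions in the same row or column and is downward-closed enough to reason locally — here Lemma~\ref{covering} plus the Beatty partition for $\{A_n\},\{B_n\}$ do the work; (3) show no move from a position in $S$ reaches another position of nim-value $1$: Nim moves are handled by the one-per-row/column property, and diagonal moves are handled by showing $(A_n+l-s, B_n+l+1-s)$ (and the analogous shifted family) is never in $S$, invoking Lemma~\ref{agaps} to match the two families and Lemma~\ref{everyrow} for the small block; (4) show every position $(a,b)\notin S$ of nonzero nim-value has a move into $S$: split on whether $(a,b)$ lies in the Nim block, in the ``forbidden collar'' of width $<l$, or in the fully-diagonal region, and in the last case use the Beatty/covering combinatorics to produce either a horizontal, vertical, or diagonal move onto the appropriate tail element. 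The main obstacle I expect is step (4) in the fully-diagonal region: one must rule out that a position strictly between two consecutive tail $1$-positions in a row can only move to $0$-positions, and more delicately that the diagonal move doesn't sabotage the argument by reaching a spurious low-value position. This is where the precise form of the offsets ($+l$ versus $+l+1$ versus $+l+2$) matters, and where Lemma~\ref{agaps}'s ``if and only if'' — not just one direction — is essential, since it simultaneously tells us where the $1$-positions are and where they are \emph{not}.
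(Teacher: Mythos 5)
Your overall framework is the same as the paper's: reduce to $a\le b$ by symmetry, take $S_0$ from Theorem~\ref{ppositionswkl}, and verify that the candidate set $S_1$ is disjoint from $S_0$, admits no internal moves, and is reachable from every position outside $S_0\cup S_1$. But the proposal stops at naming these steps; the two verifications that carry all the content are not actually performed, so as written there is a genuine gap. First, ruling out moves within $S_1$ is not just ``the one-per-row/column property'': for diagonal moves one must show that $(A_n+l-s,\,B_n+l+1-s)$ can never equal another tail element, which the paper does by subtracting the two coordinate equations and using $B_j=A_j+j$ to force $n=m$ (hence $s=0$ or a collision of the two tail forms, excluded because gaps in $\{B_k\}$ have size at least $2$); and for Nim moves one needs Lemma~\ref{agaps} to show that $A_n+l=A_m+l+1$ forces $n\notin\{\lfloor j\phi^2\rfloor\}$, so the would-be source position is not in $S_1$ at all. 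None of this is in your sketch.

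Second, and more seriously, you explicitly flag the completeness step (your step (4) in the ``fully-diagonal region'') as the main obstacle and then leave it unresolved; ``use the Beatty/covering combinatorics to produce either a horizontal, vertical, or diagonal move'' is a statement of intent, not an argument. The missing idea is this: Lemma~\ref{covering} gives an $S_1$ position in every column, and since the coordinate differences of the tail positions are $B_n+1-A_n=n+1$ while the initial positions $(2i,2i+1)$ and $(l+1,l+1)$ supply differences $1$ and $0$, the set $S_1$ also meets every diagonal $y=x+j$, $j\ge 0$, exactly once. Given $(a,b)\notin S_0\cup S_1$ with $a\le b$, let $(a,b')\in S_1$ be the position in its column: if $b>b'$ move vertically down; if $b<b'$ then the $S_1$ position on the diagonal $y=x+(b-a)$ lies on a lower diagonal than $(a,b')$ and hence (by monotonicity of $n\mapsto A_n$) strictly to the left of column $a$, so it is reached by a diagonal move, and one checks separately that this move never has to enter the forbidden region (the relevant targets satisfy $\max\ge l$ and $\min\ge l-1\ge k$). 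Your worry about ``a position strictly between two consecutive tail $1$-positions in a row'' is precisely what this column-then-diagonal dichotomy resolves, and without it the theorem is not proved.
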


\begin{proof}
By Theorem \ref{ppositionswkl}, the $0$-positions $(a,b)$ with $a \leq b$ of $W_{k,l}$ are 
\[ S_{0}= \{(i,i) | 0 \leq i <l \} \cup \{(A_n+l, B_n+l)  | n \ge 0 \}. \] 

Let $S_{1}=\{ (2i,2i+1) | 0 \leq i \leq m \} \cup \{ (l+1,l+1) \} \cup \{ (A_n+l, B_n+l+1) | n= \lfloor j \phi^2 \rfloor \text{ for some } j \ge 1 \} \cup \{ (A_n+l+1, B_n+l+2) | n= \lfloor j \phi \rfloor \text{ for some } j \ge 1 \}$. 

It suffices to prove the following:
\begin{itemize}
\item[(a)] $S_{0} \cap S_{1}= \emptyset$
\item[(b)] There is no move from a position in $S_{1}$ to a position in $S_{1}$. 
\item[(c)] From every position not in $S_{0} \cup S_{1}$, there is a move to a position in $S_1$. 
\end{itemize}

We now prove each statement. We first make note of a fact useful in the rest of the proof. 

Suppose $(A_n+l,B_n+l+1)=(A_m+l+1,B_m+l+2)$. For $A_n=A_m+1$, we must have $m=n-1$. Then $B_n=B_{n-1}+1$. This is a contradiction, as gaps in the sequence $\{B_k\}$ always have size at least two. Thus no position can be written both as $(A_n+l,B_n+l+1)$ for some $n$ and $(A_m+l+1,B_m+l+2)$ for some $m$. 

\begin{itemize}
\item[(a)]
Suppose $(A_n+l,B_n+l)=(A_m+l+1,B_m+l+2)$. For $A_n=A_m+1$, we must have $m=n-1$. So $A_n=A_{n-1}+1$. By Lemma \ref{agaps}, we have $n-1=\lfloor k \phi^2 \rfloor$ for some $k$. Since $\{ \lfloor k \phi^2 \rfloor \} $ and $\{ \lfloor j \phi \rfloor \}$ are complementary sequences, we have $n-1 \neq \lfloor j \phi \rfloor$. Since from above we have that $(A_m+l+1,B_m+l+2)$ is not also of the form $(A_j+l, B_j+l+1)$, we have that $(A_m+l+1,B_m+l+2) \notin S_1$. 

We see that we cannot have $(A_n+l,B_n+l)=(A_m+l,B_m+l+1)$ because the first coordinate requires $n=m$ and the second requires $n \neq m$. 

It is clear that $\{ (2i,2i+1) | 0 \leq i \leq m \} \cup \{ (l+1,l+1) \}$ does not intersect $S_0$. 

Thus $S_0 \cap S_1 =\emptyset$. 

\item[(b)]
We first show that no diagonal moves exist between positions in $S_1$. 

A diagonal move cannot be taken to a position in the set $\{ (2i,2i+1) | 0 \leq i \leq m \} \cup \{ (l+1,l+1) \} \}$ because no diagonal moves can be made to a position $(a,b)$ unless $\max(a,b) \ge l$ and because the difference between the two coordinates of positions in $\{ (A_n+l, B_n+l+1) | n= \lfloor j \phi^2 \rfloor \text{ for some } j \ge 1 \} \cup \{ (A_n+l+1, B_n+l+2) | n= \lfloor j \phi \rfloor \text{ for some } j \ge 1 \}$ is at least $2$ because $B_n=A_n+n$. 

We now show that there are no diagonal moves starting at a position of the form $(A_n+l, B_n+l+1)$.

Suppose we subtract $s$ from each coordinate in $(A_n+l, B_n+l+1)$ and reach a position of the form $(A_m+l
+1,B_m+l+2)$, so we have $A_n+l-s=A_m+l+1$ and $B_n+l+1-s=B_m+l+2$. Note that $B_j=A_j+j$ because $\phi^2=\phi+1$. So $A_n+n+l+1-s=A_m+m+l+2$, and subtracting $1$ plus the first equation we have $n=m$.We have assumed $(A_n+l,B_n+l+1) \in S_1$, so we can only have $(A_n+l+1,B_n+l+2) \in S_1$ if $(A_n+l+1,B_n+l+2)=(A_j+l,B_j+l+1)$. But as we showed at the beginning of $(a)$, no position can be written in both forms. 

Suppose we subtract $s$ from each coordinate in $(A_n+l,B_n+l+1)$ and reach a position of the form $(A_j+l,B_j+l+1)$, so $A_n+l-s=A_j+l$ and $B_n+l+1-s=B_j+l+1$. The second equation becomes $A_n+n+l+1-s=A_j+j+l+1$, and then subtracting $1$ plus the first equation, we have $n=j$. But then $s=0$, and thus there is no move. 

Similarly, we can show that there are no diagonal moves starting at a position of the form $(A_n+l+1,B_n+l+2)$.

Next, we show that no nim moves exist between positions in $S_1$. 
Suppose the starting position is $(A_n+l,B_n+l+1)$, and we make a nim move to $(A_n+l,x)$. This position can only be in $S_1$ if $A_n+l=A_m+l+1$, which implies that $m=n-1$. Since $A_n=A_{n-1}+1$, by Lemma \ref{agaps}, we have $n=\lfloor k \phi^2 \rfloor +1$ for some $k$. Since $n-1=\lfloor k \phi^2 \rfloor$ for some $k$, then $n \neq \lfloor j \phi^2 \rfloor$ for any $j$. But then $(A_n+l,B_n+l+1) \notin S_1$. 
Now suppose that starting from $(A_n+l,B_n+l+1)$ we make a nim move to $(x,B_n+l+1)$. Then $B_n+l+1=B_m+l+2$ for some $m$. But this is a contradiction, because no consecutive numbers are in the sequence $\{B_k \}$. 

The proof that there is no nim move starting at a position of the form $(A_n+l+1, B_n+l+2)$ is similar. 

\item[(c)] As there is symmetry in the rule set and thus $g$-positions for $W_{k,l}$ across the line $y=x$, we prove only for positions $(a,b)$ where $a \leq b$. By Lemma \ref{covering}, the set $S_1$ plus the corresponding set of $1$-positions with $a>b$ contains a position at every $x$-coordinate. 
The difference between the $y$-coordinates and $x$-coordinates in each pair $(A_n+l,B_n+l+1)$ and $(A_n+l+1,B_n+l+2)$ is $B_n+1-A_n=n+1$. Every $n \ge 1$ is either of the form $n=\lfloor j \phi \rfloor$ for some $j \ge 1$ or $n=\lfloor j \phi^2 \rfloor$ for some $j \ge 1$. So the set of differences between $y$-coordinates and $x$-coordinates includes all $n \ge 2$. The positions $(2i,2i+1)$ and $(l+1, l+1)$ expand this set of differences to include all $n\ge 0$. Therefore the set $S_1$ contains a position on every diagonal $y=x+j$ for $j \ge 0$. 

Having established the presence of a position in $S_1$ at every $x$-coordinate and on every diagonal, we show that from any position $(a,b)$ with $a \leq b$ such that $(a,b) \notin S_0 \cup S_1$, there is a move to a position in $S_1$. 

Consider the position $(a,b)$ with $a \leq b$. There is some position $(a, b')$ in $S_1$. If $b>b'$, then we make a nim move to $(a,b^{'})$. If $b=b^{'}$, then $(a,b) \in S_1$ and no move is necessary. Suppose $b <b^{'}$. We show that the position $(a^{'},b^{''})$ in $S_1$  on the diagonal $y=x+(b-a)$ satisfies $a^{'}<a$, and thus can be reached from $(a,b)$ by a diagonal move. 
The position in $S_1$ on the diagonal $y=x+(b'-a)$ has $y$-coordinate of $b'$. Since $b<b^{'}$, the position, the diagonal $y=x+(b-a)$ is lower. The difference between the $y$ and $x$ coordinates of a position $(A_n+l,B_n+l+1)$ or $(A_n+l+1,B_n+l+2)$ is $n+1$, and clearly $A_n$ decreases with $n$, so a lower diagonal will have its position in $S_1$ at a smaller $x$-coordinate. We note that the  subset $\{ (l-1,l) \} \cup \{ (l+1,l+1)  \} \cup \{ (A_n+l, B_n+l+1) | n= \lfloor j \phi^2 \rfloor \text{ for some } j \ge 1 \} \cup \{ (A_n+l+1, B_n+l+2) | n= \lfloor j \phi \rfloor \text{ for some } j \ge 1 \}$ of $S_1$ contains a position on each diagonal on or above $y=x$, and that each position $(i,j)$ in this subset satisfies $\max(l-1,l) \ge l$ and $\min(l-1,l) \ge l-1 \ge k$, so no move into the forbidden region is ever required to reach a position in $S_1$ on a given diagonal. 
So the position $(a^{'},b^{''})$ in $S_1$ is reachable by a diagonal move. 
Thus from every position not in $S_0 \cup S_1$, there is a move to a position in $S_1$. 

\end{itemize}

As $S_1$ satisfies properties $(a),(b),$ and $(c)$, $S_1$ is indeed the set of $1$-positions $(a,b)$ with $a \leq b$ of $W_{k,l}$. 

\end{proof}

\section{The Class $T_{k}$}

In this section, we prove Theorem \ref{tk} which gives an invariance property of Sprague-Grundy function for the class $\{T_k\}$. First, we establish some necessary lemmas which bound the location of the $g$-positions in the game $T_K$. These $g$-positions will relate to those of the game $W_1$, as $T_k$ restricts the rule set of $W_1$. 

\begin{table}[h!]
\centering
\caption{Nim Values of $W_1$}
\label{my-label}
\begin{tabular}{l|llllllllllllllllllllllll}
3 & 3 & 2 & 0 & 4 & 1 & 8 & 9 & 10 & 5 & 7 & 6 & 12 & 15 & 11 & 16 & 17 & 13 & 20 & 14 & 21 & 18 & 19 &24 & 25 \\
2 &2 &3  &1  &0  & 6 & 7 & 5 & 4 & 10 & 11 & 9 & 8 & 14 & 15 &13 & 12 & 18 & 19 & 17 & 16 &22  &23  & 21 & 20  \\
1 & 1& 0 & 3 & 2 & 5 & 4 & 7 & 6 & 9 & 8 & 11 & 10 & 13 & 12 & 15 & 14 & 17 & 16 & 19 & 18 & 21 & 20 & 23 & 22  \\
0 & 0 & 1 & 2 & 3 & 4 & 5 & 6 & 7 & 8 & 9 & 10 & 11 & 12 & 13 & 14 & 15 &16  & 17 & 18 & 19 & 20 & 21 & 22 & 23 \\ \hline
 & 0 & 1 & 2 & 3 & 4 & 5 & 6 & 7 & 8 & 9 & 10 & 11 & 12 & 13 & 14 & 15 &16  & 17 & 18 & 19 & 20 & 21 & 22 & 23
\end{tabular}
\end{table}

\begin{lemma}\label{yequalsthree}
Let $(x,y)$ be the position with nim-value $0$ for a given $y$. We consider for which $g$ the $g$-position with the given $y$-coordinate occurs at $(x+g,y)$. 
For $y=0$, this occurs for all $g$. For $y=1$, this occurs exactly when $g$ is even. 
For $y=2$, this occurs exactly when $g \equiv 0$ mod $4$. 
For $y=3$, this does not occur for any $g \equiv 0$ mod $4$. 
\end{lemma}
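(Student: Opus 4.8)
The plan is to treat the rows $y=0,1,2,3$ in turn, exploiting that in $W_1$ a diagonal move can never land in row $0$ or column $0$; consequently the two boundary rows are pure $2$-pile Nim and the rows just above them are only mildly perturbed. Throughout write $N(x,y)$ for the nim-value of $(x,y)$ in $W_1$; since every horizontal Nim move is available, $N(\cdot,y)$ is injective, so ``the $g$-position of row $y$'' is well-defined.

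From any square of row $0$ or row $1$ every legal move is a Nim move staying within those two rows, so $N(x,0)=x$ and $N(x,1)=x\oplus 1$. Hence in row $0$ the $0$-position is $(0,0)$ and the $g$-position is $(g,0)=(0+g,0)$, which settles $y=0$; and in row $1$ the $0$-position is $(1,1)$ and the $g$-position is $(g\oplus 1,1)$, which equals $(1+g,1)$ precisely when $g$ is even, since $g\oplus 1=g+1$ for $g$ even and $g-1$ for $g$ odd. For row $2$ the only diagonal move from $(x,2)$ is the one to $(x-1,1)$, legal when $x\ge 2$, so I would prove by induction on $x$, splitting on $x\bmod 4$ in the inductive step, that $N(x,2)=4\lfloor x/4\rfloor+\pi(x\bmod 4)$, where $\pi$ is the permutation with $\pi(0)=2,\ \pi(1)=3,\ \pi(2)=1,\ \pi(3)=0$; the step is a one-line mex computation once one knows that $\{N(x',2):x'<4\lfloor x/4\rfloor\}=\{0,\dots,4\lfloor x/4\rfloor-1\}$, which holds because $\pi$ permutes each block of four. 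Then the $0$-position of row $2$ is $(3,2)$, the $g$-position is $(4\lfloor g/4\rfloor+\pi^{-1}(g\bmod 4),\,2)$, and this equals $(3+g,2)$ iff $\pi^{-1}(g\bmod 4)=3+(g\bmod 4)$; running through the four residues, this holds exactly for $g\equiv 0\pmod 4$.

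Row $y=3$ is the main obstacle, because the nim-values of $W_1$ are genuinely irregular there for small $x$ before settling into a pattern. From $(x,3)$ the diagonal moves go to $(x-1,2)$ and $(x-2,1)$, so $N(x,3)$ is the mex of the earlier row-$3$ values together with the five numbers $N(x,0),\ N(x,1),\ N(x,2),\ N(x-1,2),\ N(x-2,1)$, all determined above. I would proceed in two stages. First, compute $N(x,3)$ for $0\le x\le 21$ directly from this recursion (these agree with the $W_1$ table above), recording that the $0$-position of row $3$ is $(2,3)$ and, crucially, that $\{N(0,3),\dots,N(21,3)\}=\{0,1,\dots,21\}$. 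Second, prove by induction on $j\ge 5$ that whenever $\{N(0,3),\dots,N(4j+1,3)\}=\{0,\dots,4j+1\}$, the analogous mex computations (using the row-$0,1,2$ formulas) give
\[
N(4j+2,3)=4j+4,\quad N(4j+3,3)=4j+5,\quad N(4j+4,3)=4j+2,\quad N(4j+5,3)=4j+3,
\]
so that $\{N(0,3),\dots,N(4j+5,3)\}=\{0,\dots,4j+5\}$, which re-establishes the hypothesis for $j+1$; the base case $j=5$ is the first stage (note $21=4\cdot 5+1$). Consequently $N(4q+2,3)=4q+4\ne 4q$ for every $q\ge 5$, while for $1\le q\le 4$ one reads off $N(6,3)=9,\ N(10,3)=6,\ N(14,3)=16,\ N(18,3)=14$, none of which equals $4q$. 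Hence for every $g\equiv 0\pmod 4$ with $g\ge 1$ the $g$-position of row $3$ is not $(2+g,3)$ (for $g=0$ it is, trivially, $(2,3)$ itself), which is the claim. The delicate points will be isolating the correct threshold — the induction can begin only once row $3$ has ``saturated'', i.e.\ at the first index $\equiv 1\pmod 4$ for which the first $m+1$ row-$3$ values are exactly $\{0,\dots,m\}$, which is $m=21$ — and verifying the finitely many small cases by hand; the inductive step itself is routine.
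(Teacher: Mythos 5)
Your proof is correct and follows essentially the same route as the paper: rows $0$ and $1$ are pure Nim, row $2$ is handled by an explicit formula proved by induction in blocks of four (your permutation $\pi$ is the paper's ``$x\oplus 2$ adjusted by residue mod $4$''), and row $3$ is settled by an eventual-periodicity induction plus a finite check of small cases. The only differences are cosmetic --- you start the row-$3$ induction at the saturation index $x=21$ where the paper cites $x\ge 20$, and you spell out the mex computations that the paper dismisses with ``similarly'' --- so your write-up is in fact more complete than the original.
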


\begin{proof}
For $y=0$ and $y=1$, the nim-value of the position $(x,y)$ is $x \oplus y$. This is because no diagonal moves may occur, so the nim-values will equal those of $2$-pile Nim. So the $0$-position with $y=0$ is at $(0,0)$ and the $g$-position with $y=0$ is at $(0,g)$ for all $g$. Also, the $0$-position with $y=1$ is at $(0,1)$, and the $g$-position with $y=1$ is at $(0,g+1)$ for $g$ even and $(0,g-1)$ for $g$ odd. 

For $y=2$, we can compute that the $0$-position is at $(3,2)$.The pattern for the $g$-positions depends on the residue class mod $4$ of the $x-$coordinate. If $x \equiv 0 \mod 4$ or $x \equiv 1 \mod 4$, then the nim-value of $(x,2)$ is $x \oplus 2$. If $x \equiv 2 \mod 4$, then the nim-value of $(x,2)$ is $(x \oplus y) + 1$, and if $x \equiv 3 \mod 4$, the the nim-value of $(x,2)$ is $(x \oplus y)-1$. We prove inductively, showing that this holds in the interval $[4k,4k+3]$ if it holds for $[0,4k-1]$. The proof proceeds by applying the definition of a nim-value, which for a nonterminal position $(a,b)$ is the minimum excluded natural number of the set of positions reachable in one move from $(a,b)$. The base case can be computed. 
Suppose the formula holds for $x \in [0,4k-1]$. The nim values in $[0,4k-1]$ can be reached because they occur in the row $y=2$ for smaller $x$ by the induction hypothesis. At $x=4k$, the nim-values $4k$ and $4k+1$ occur lower in that column by the formula for $g$-positions at $y=0$ and $y=1$. By the inductive hypothesis and the formulas for $g$-positions for $y=0$ and $y=1$, there are no positions with nim-value $4k+2$'s reachable in one move. Thus the minimum excluded integer is $4k+2$. The proof of the rest of the formula uses the same idea and thus is omitted. So we have the $0$-position located at $(3,2)$ and a $g$-position located at $(g+3,2)$ exactly when $g \equiv 0$ mod $4$. 

Similarly, we can prove that for $y=3$ and $x \ge 20$, the following pattern holds for $g$-positions, which depends on the residue class mod $4$ of the $x$-coordinate. If $x \equiv 0$ or $x \equiv 1$ mod $4$, then the nim-value of $(x,3)$ is $x-2$. If $x \equiv 2$ or $x \equiv 3$ mod $4$, the nim-value of $(x,3)$ is $x+2$. We compute the nim-values for $x<20$ separately, using the recursive definition of a nim-value. We find that the $0$-position is at $(2,3)$ and no $g$-position with $g \equiv 0$ mod $4$ is at $(g+2,3)$. 
\end{proof}

\begin{lemma}\label{pushingaway}
Suppose $(x,y)$ is a $g$-position with $x<y, y=\lfloor \phi n \rfloor +1$ for some $n$. Then $x \leq \lfloor \phi^2 n  \rfloor+1$. 
\end{lemma}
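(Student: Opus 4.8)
The plan is that essentially no game theory is needed here: the conclusion falls out of the two hypotheses once the notation is unpacked. Write $A_n=\lfloor\phi n\rfloor$ and $B_n=\lfloor\phi^2 n\rfloor$, and recall that $\phi^2=\phi+1$ gives $B_n=A_n+n$, so the asserted bound $x\le\lfloor\phi^2 n\rfloor+1$ is the same as $x\le A_n+n+1$. The rows of the form $y=\lfloor\phi n\rfloor+1=A_n+1$ are singled out because these are exactly the $y$-coordinates at which a $P$-position of $T_k$ (equivalently of $W_1$) occurs with the \emph{small} coordinate in the horizontal slot; this is the same bookkeeping that underlies the low-row analysis in Lemma~\ref{yequalsthree}, and it is why such rows recur in the sequel.

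The steps, in order, are: first dispose of $n=0$, where $y=1$ and $x<y$ forces $x=0\le 1=B_0+1$. For $n\ge 1$: since $x$ and $y$ are nonnegative integers, $x<y$ forces $x\le y-1=\lfloor\phi n\rfloor=A_n$. Next, because $\phi^2>\phi$ and $n\ge 0$ we have $\phi^2 n\ge\phi n$, hence $B_n=\lfloor\phi^2 n\rfloor\ge\lfloor\phi n\rfloor=A_n$. Chaining these two inequalities gives $x\le A_n\le B_n<B_n+1=\lfloor\phi^2 n\rfloor+1$, which is the claim. (One could equally use the explicit identity $B_n=A_n+n$ in the middle step, but the cruder comparison $\lfloor\phi^2 n\rfloor\ge\lfloor\phi n\rfloor$ already suffices.)

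I do not expect a genuine obstacle: the hypothesis that $(x,y)$ is a $g$-position is not used at all, and the only content is the positional inequality $x<y$ together with the shape of $y$. The one thing to be careful about is recognizing that the loose bound $\lfloor\phi^2 n\rfloor+1$ is exactly the form wanted downstream — this lemma is a uniform packaging of an elementary estimate to be fed into the blocking-style arguments for $T_k$ — rather than attempting to prove the sharper structural fact (that the $0$-position in row $A_n+1$, which sits at column $B_n+1$ by the symmetry of the Fraenkel--Ho $P$-set, is the rightmost small-nim-value position in that row); that sharper statement would genuinely require controlling the restricted diagonal moves of $T_k$, but it is not what is asserted here.
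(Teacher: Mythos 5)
You are right that, read literally, the lemma is vacuous: for $n\ge 1$ the hypothesis $x<y=\lfloor\phi n\rfloor+1\le\lfloor\phi^2 n\rfloor+1$ already forces the conclusion, and your two-line argument is a correct proof of the statement as printed. But that observation is itself the signal that the statement is mis-transcribed, and the version you have proved is not the one the paper proves or uses. The paper's own proof opens by locating the $0$-position of the row $y=\lfloor\phi n\rfloor+1$ at $(\lfloor\phi^2 n\rfloor+1,\lfloor\phi n\rfloor+1)$ --- a position with $x>y$ --- and then states the real claim: any $g$-position in that row with larger $x$ lies no more than $g$ to the right of that $0$-position, i.e.\ $x\le\lfloor\phi^2 n\rfloor+1+g$ for $g$-positions with $x>y$. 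That is exactly what Lemma~\ref{glocation} consumes: knowing the $0$-positions lie under $y=\phi x$, the ``at most $g$ to the right of the $0$ in each row'' bound is what places every $g$-position above the diagonal under the line $y=\phi x+g$, which in turn drives Theorem~\ref{tk}. Your bound, which contains no $g$ and assumes $x<y$, cannot yield that conclusion.

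The content you explicitly set aside as ``not what is asserted here'' is in fact the entire content of the lemma, and it does require game-theoretic input. The paper's argument is a greedy/mex placement argument: placing $\ell$-positions row by row in increasing $\ell$, the $g$-position of a row could sit a full $g$ to the right of that row's $0$-position only if in \emph{every} lower row the relevant positions were already shifted exactly $g$ to the right of the corresponding $0$-positions; Lemma~\ref{yequalsthree} shows this exact-shift pattern survives at most through the rows $y=0,1,2$ (it holds at $y=2$ only for $g\equiv 0\pmod 4$, and fails for precisely those $g$ at $y=3$), so no row ever has its $g$-position more than $g$ to the right of its $0$-position. None of this appears in your proposal. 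So while your proof of the literal statement is valid, as a proof of the lemma the paper needs it has a genuine gap: the statement should be corrected (to $x>y$ and $x\le\lfloor\phi^2 n\rfloor+1+g$) and then proved by the row-by-row placement argument, not dismissed as an arithmetic triviality.
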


\begin{proof}
The $0$-position with $y=\lfloor \phi n \rfloor +1$ is at $(\lfloor \phi^2 n  \rfloor+1, \lfloor \phi n \rfloor +1)$. The lemma states that any $g$-position with the same $y$ and larger $x$ will be no further than $g$ to the right of the $0$. We prove by considering the recursive definition of a nim-value, noting that the location of a $g$-position in a given row will depend only on the location of the $l$-positions for $0 \leq l<g$ in that row and the $g$-positions in lower rows. 

Consider placing $0$-positions row by row. The $0$ in a given row will be at the smallest $x$-coordinate such that there is not already a $0$ with that $x$-coordinate or on that diagonal for some smaller $y$. Next we similarly place $g$-positions row by row, in order of increasing $g$, now avoiding not only rows and diagonals already containing $g$-positions but also positions already filled with a $0,1,\dots,$ or $g-1$. The first $x$-coordinate where it is possible to place a $g$ will be largest if the rows and diagonals already containing $g$ are shifted $g$ to the right of those containing $0$-positions. Otherwise, in some $x$-coordinate before that $g$ to the right of the $0$-position it is possible to place a $g$. For the rows and diagonals containing $g$ to be shifted $g$ to the right, the $g$ in each row must be $g$ to the right of the $0$ that row. By Lemma \ref{yequalsthree} this can happen at most for the consecutive rows $y=0, y=1$ and $y=2$, and in these rows, we know the location of $g$-positions and none is more than $g$ to the right of $0$. So there is no row in which a $g$ is more than $g$ to the right of $0$.  
\end{proof}

\begin{lemma}\label{glocation}
All $g$-positions above the line $y=x$ are under the line $y=\phi x+g$. Excluding the $g$-position with $b=0$, all $g$-positions above the line $y=x+g$ are to the right of $y=(g+1)x$. 
\end{lemma}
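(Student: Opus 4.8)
The plan is to establish both halves of the lemma by induction on $g$, exploiting three ingredients: the explicit description of the $0$-positions of $W_1$ as the shifted Wythoff pairs $(A_n+1,B_n+1)$ (together with their reflections); the principle isolated in the proof of Lemma~\ref{pushingaway}, that a $g$-position lies within horizontal distance $g$ of the $0$-position of its row (and, by the symmetry across $y=x$, within vertical distance $g$ of the $0$-position of its column); and the explicit description of the low rows $y=0,1,2,3$ furnished by Lemma~\ref{yequalsthree}.

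For the first assertion I would begin with the base case $g=0$: the $0$-positions strictly above $y=x$ are exactly the points $(A_n+1,B_n+1)$ with $n\ge 1$, and $y\le\phi x+g$ becomes $B_n+1<\phi(A_n+1)$; substituting $B_n=A_n+n$ and $\phi^2=\phi+1$ reduces this to $\lfloor\phi n\rfloor>\phi n-1$, which holds since $\phi n$ is irrational. For the inductive step let $(x,y)$ be a $g$-position with $y>x$ and let $(x_0,y)$ be the $0$-position of row $y$. If $x_0<y$, then $y=B_n+1$ and $x_0=A_n+1$, with $|x-x_0|\le g$: when $x\ge x_0$ one gets $y=B_n+1\le\phi(A_n+1)\le\phi x$, which is better than needed, and when $x<x_0$ the distance bound gives $A_n+1\le x+g$, which must be sharpened using Lemma~\ref{yequalsthree} and the recursion, since the positions lying to the left of the row's $0$-position turn out to be confined to low rows. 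If instead $x_0>y$, then $y=A_n+1$ and $x_0=B_n+1$ sits a full $n$ columns past the diagonal; the distance estimate $|x-x_0|\le g$ then forces $n<g$, so $y=A_n+1$ is one of finitely many low rows and the claim is read off from Lemma~\ref{yequalsthree}.

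The second assertion I expect to deduce from the first together with a short finite verification. For a $g$-position $(x,y)$ with $g\ge 1$ lying above $y=x+g$ and different from the excluded boundary position, we have $y>x$, hence $y\le\phi x+g$ by the first part; since $\phi<2\le g+1$, the inequality $\phi x+g\le(g+1)x$ holds whenever $x\ge g/(g+1-\phi)$, and that already places $(x,y)$ on the correct side of $y=(g+1)x$. For the finitely many remaining small columns the first part leaves at most one candidate row above $y=x+g$, and the explicit values supplied by Lemma~\ref{yequalsthree} (read along columns) show no $g$-position actually occurs there.

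The main obstacle is precisely the subcase of the inductive step for the first assertion in which $(x,y)$ sits on the far side of the diagonal from the $0$-position of its line: there the crude ``within distance $g$'' estimate only yields $y\le\phi x+\phi g$, so one has to argue — using Lemma~\ref{yequalsthree} together with the observation from the proof of Lemma~\ref{pushingaway} that a full offset of $g$ is attained only in the rows $y=0,1,2$ — that such positions are forced into one of the few lines that can be checked directly. Making this case division exhaustive, and pinning down exactly which boundary position must be excluded in the second assertion, is where the real care is required.
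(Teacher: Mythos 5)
Your overall strategy for Lemma~\ref{glocation} --- combine the explicit $0$-positions (via $B_n+1<\phi(A_n+1)$) with the ``at most $g$ away from the $0$-position of its line'' principle of Lemma~\ref{pushingaway}, then deduce the second assertion by intersecting the wedge between $y=x+g$ and $y=(g+1)x$ with the region $y\le \phi x+g$ and checking the surviving columns $x\le 1$ by Nim values --- is the same as the paper's, and your base case and your handling of the second assertion are essentially the paper's argument. The fatal problem is in your main step for the first assertion: you invoke a \emph{two-sided} bound $|x-x_0|\le g$ between a $g$-position and the $0$-position $(x_0,y)$ of its row. Lemma~\ref{pushingaway} only gives the one-sided bound in the direction away from the diagonal; a $g$-position may lie far on the other side of its row's $0$-position. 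Concretely, the $0$-position of row $y=5$ is $(8,5)$ while the $4$-position of that row is $(1,5)$ (the value of $(5,1)$ is $5\oplus 1=4$), at horizontal distance $7>4$; similarly $(3,4)$ is a $1$-position while the $0$-position of row $4$ is $(6,4)$, at distance $3>1$. These same examples refute the two conclusions you draw from that bound: $(3,4)$ has $y=A_2+1$ with $n=2\ge g=1$, so your case $x_0>y$ does \emph{not} force $n<g$; and $g$-positions lying to the left of their row's $0$-position are not confined to low rows. You flag this subcase yourself as ``where the real care is required,'' but the patch you sketch for it is false, so the argument does not close. (The ``induction on $g$'' is also never actually used: no step appeals to the statement for $g-1$.)

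The repair is the paper's: transpose and apply Lemma~\ref{pushingaway} along \emph{columns}. For a $g$-position $(x,y)$ with $y>x$ and $x=A_n+1$, the $0$-position of that column above the diagonal is $(A_n+1,B_n+1)$ with $B_n+1<\phi(A_n+1)=\phi x$; the $g$-position of that column either lies between the diagonal and this $0$-position, in which case $y<B_n+1<\phi x$ trivially, or lies above it, in which case the one-sided bound gives $y\le B_n+1+g<\phi x+g$. The troublesome ``wrong side of the $0$-position'' configuration that you were trying to dispatch with Lemma~\ref{yequalsthree} simply never arises in this orientation, because every position in the column on the near side of the $0$-position already satisfies the desired inequality. (Columns of the form $x=B_n+1$, whose $0$-position lies below the diagonal, still need a word, as they do in the paper, but that is a much smaller gap than the one in your case analysis.)
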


\begin{proof}
First, we show that all $0$-positions are under $y=\phi x$. From Theorem 7 in \cite{FH}, we have that the $0$-positions of $W_1$ and $T_k$ for all $k \ge 0$ form the set $\{ (0,0) \} \cup \{ (A_n+1,B_n+1) | n \ge 0 \}$. 
Let $x=\lfloor \phi n \rfloor +1$. We have $y=\lfloor \phi^2 n \rfloor +1 < \phi \lfloor \phi n \rfloor +1 =\phi (x-1) +1= \phi x +(1-x) < \phi x$. Thus, all $0$-positions are under the line $y=\phi x$, so by Lemma \ref{pushingaway}, all $g$-positions above the line $y=x$ are under the line $y=\phi x+g$. 

Next, we show that excluding the $g$-position with $b=0$, all $g$-positions above the line $y=x+g$ are to the right of $y=(g+1)x$. The intersection of $y=x+g$ and $y=(g+1)x$ is at $x=1$. The intersection of $\phi x+g$ and $(g+1)x$ is at $x=\frac{g}{g+1-\phi}$. For $g \ge 2$, $\frac{g}{g+1-\phi}<2$. So we only must show that at $x=1$ the $g$ is less than or equal to $g+1$. No diagonal moves are allowed up to this point, so the nim-values will equal those of $2$-pile Nim, which can be computed via the nim sum. So the $g$ is no higher than $g+1$ because $y \oplus 1 \leq y+1$. 
This proves the lemma. 
\end{proof}

\begin{figure}\label{tkplot}
\caption{Regions Used in Proof of Theorem \ref{tk} for g=2}
\includegraphics[scale=0.8]{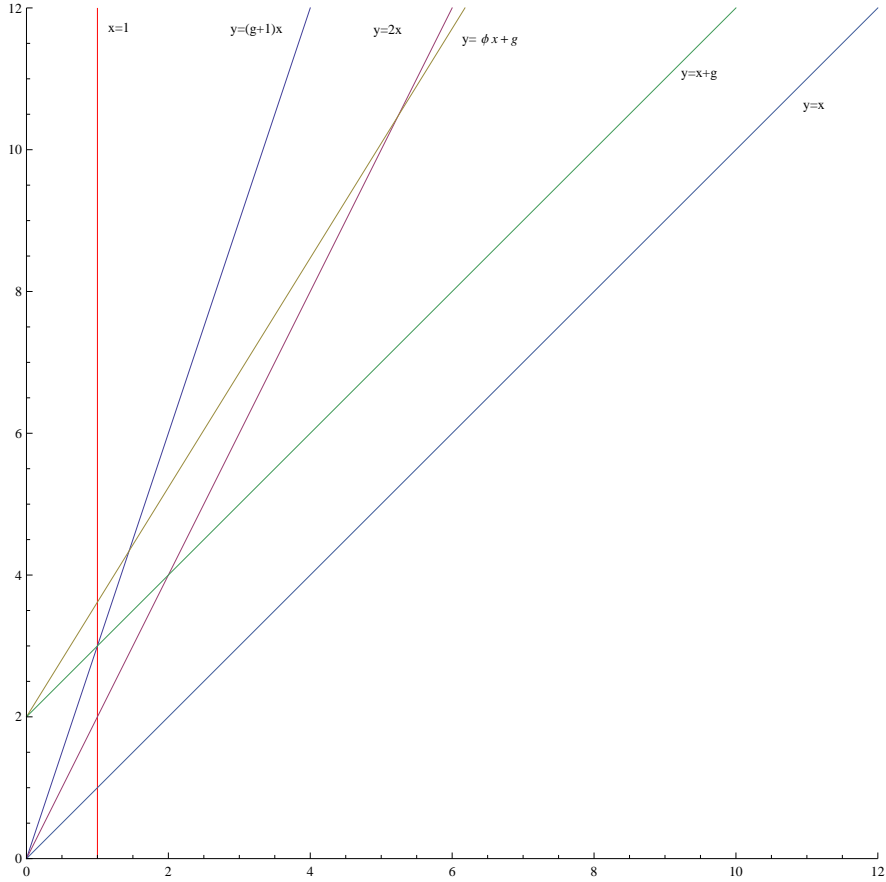}
\end{figure}

\begin{theorem}\label{tk}
(Conjectures 2 and 3 of Fraenkel and Ho). Let $k$ be a nonnegative integer. For every integer $g$ in the range $0 \leq g \leq k$, the two games $T_k$ and $W_1=T_{\infty}$ have the same set of positions with nim-value $g$. In particular, if $k$ and $l$ are nonnegative integers, for every integer $g$ in the range $0 \leq g \leq \min(k,l)$, the two games $T_k$ and $T_l$ have the same set of positions with nim-value $g$. 
\end{theorem}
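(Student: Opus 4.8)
The plan is to show that $T_k$ and $W_1$ have the same set of $g$-positions for every $g$ with $0\le g\le k$; the ``in particular'' clause then follows at once, since for $g\le\min(k,l)$ the $g$-positions of $T_k$ and those of $T_l$ both equal the $g$-positions of $W_1$. Since $T_k$ merely restricts the diagonal moves of $W_1$, every legal $T_k$-move is a legal $W_1$-move, and the two rule sets differ in exactly one way: for a position $(a,b)$ with $a\le b$ (the case $a>b$ being symmetric), a diagonal move to $(a-s,b-s)$, which in both games requires $a-s\ge 1$, is legal in $W_1$ but illegal in $T_k$ precisely when $\lfloor\frac{b-s}{a-s}\rfloor-\lfloor\frac ba\rfloor\ge k+1$. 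Here moving toward the axes can only increase the ratio, so $\lfloor\frac{b-s}{a-s}\rfloor\ge\lfloor\frac ba\rfloor\ge 1$ and this difference is always nonnegative.

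The geometric input is the following consequence of Lemma~\ref{glocation}: for $0\le g\le k$, every $g$-position $(x,y)$ of $W_1$ with $1\le x\le y$ satisfies $\lfloor y/x\rfloor\le k+1$. Indeed Lemma~\ref{glocation} gives $y<\phi x+g\le\phi x+k$, and since $x\ge 1$ and $2-\phi>0$ we obtain $\phi x+k<(k+2)x$, hence $y<(k+2)x$; by the symmetry of $W_1$ across $y=x$ the same bound holds with $x$ and $y$ interchanged. The consequence is: if a position $(a,b)$ (say $a\le b$) has a $W_1$ diagonal move to a $g$-position $(x,y)=(a-s,b-s)$ with $g\le k$, then, since diagonal moves never touch the axes, $x=a-s\ge 1$ and $x\le y$, so $\lfloor y/x\rfloor\le k+1$; together with $\lfloor\frac ba\rfloor\ge 1$ this gives $0\le\lfloor\frac{b-s}{a-s}\rfloor-\lfloor\frac ba\rfloor\le k$, i.e.\ the move is legal in $T_k$ as well. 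Put differently, a $W_1$-move that is illegal in $T_k$ always lands in the cone $\max\ge(k+2)\min$, which (see Figure~\ref{tkplot}) contains no $g$-position with $g\le k$; thus $T_k$ and $W_1$ have exactly the same access, by any move, to all $g$-positions with $g\le k$.

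I would then conclude by strong induction on $a+b$, proving for every position $(a,b)$ that $\nu_{T_k}(a,b)=\nu_{W_1}(a,b)$ whenever one of the two nim-values is $\le k$, and otherwise both exceed $k$ (equivalently: for each $g\le k$, $(a,b)$ is a $g$-position in $T_k$ iff it is one in $W_1$). The base case $(0,0)$ is trivial. For the step, let $S_W$ and $S_T$ denote the sets of nim-values of positions reachable from $(a,b)$ in $W_1$ and in $T_k$ respectively; it suffices to check $S_W\cap\{0,\dots,k\}=S_T\cap\{0,\dots,k\}$, since if two subsets of $\{0,1,2,\dots\}$ agree on $\{0,\dots,k\}$ then either their $\mathrm{mex}$ values are equal and $\le k$, or both exceed $k$. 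The inclusion of $S_T\cap\{0,\dots,k\}$ in $S_W$ holds because $T_k$-moves are $W_1$-moves and the induction hypothesis identifies the nim-values of the (strictly smaller) targets. For the reverse inclusion, a position reachable from $(a,b)$ in $W_1$ with nim-value $v\le k$ is reached either by a Nim move or by a diagonal move landing on a $v$-position; in the latter case that move is $T_k$-legal by the previous paragraph, so in either case the target is $T_k$-reachable and, by the induction hypothesis, still carries nim-value $v$. This closes the induction, and the theorem (together with its corollary) follows.

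The real content is thus carried by the already-established geometric lemmas~\ref{yequalsthree}, \ref{pushingaway}, and \ref{glocation}, which pin the $g$-positions of $W_1$ into the thin region below $y=\phi x+g$ around the main diagonal; because the slope $\phi$ is below $k+2$ and the shift $g$ is at most $k$, the ratio-floor clause defining $T_k$ is never triggered for $g\le k$. Granting those, the main obstacle is just the bookkeeping of the induction, whose only subtle points are the reduction to $a\le b$ by symmetry and the remark that diagonal moves never reach the coordinate axes, so that the hypothesis $x\ge 1$ in the geometric bound is automatically met.
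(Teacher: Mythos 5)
Your proposal is correct, and it follows the paper's overall strategy --- use Lemma \ref{glocation} to confine the $g$-positions with $g\le k$ to a narrow region around the diagonal, then show that $T_k$'s ratio-floor restriction never blocks a diagonal move into that region --- but the execution of the second half is genuinely different and, I think, cleaner. The paper splits into three cases according to the position's location relative to the lines $y=x+g$ and $y=2x$, and in each case computes how far along a diagonal one may legally travel (to $y=(k+1)x$ or $y=(k+2)x$), matching this against the second clause of Lemma \ref{glocation} (the bound $y<(g+1)x$ off the near-diagonal band). You instead observe that the relevant quantity $\bigl\lfloor\frac{b-s}{a-s}\bigr\rfloor$ is just $\lfloor y/x\rfloor$ evaluated at the landing square, and that the first clause of Lemma \ref{glocation} alone ($y<\phi x+g$, with $x\ge1$ forced because diagonal moves avoid the axes) already gives $\lfloor y/x\rfloor\le k+1$ there; subtracting $\lfloor b/a\rfloor\ge1$ kills the case analysis in one line. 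Your version also makes explicit the strong induction on $a+b$ and the mex-agreement-below-$k$ argument, which the paper leaves implicit; this is worth having, since without it there is an apparent circularity (the geometric lemmas bound the $g$-positions of $W_1$, while one must rule out stray $g$-positions of $T_k$ outside that region, and it is the induction hypothesis that identifies the two sets among smaller positions). Both arguments lean equally on Lemmas \ref{yequalsthree}--\ref{glocation}, so the real content is unchanged; what your route buys is the elimination of the $y=2x$ dichotomy and a tighter logical scaffold.
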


\begin{proof}
We show that the $g$-positions for $0 \leq g \leq k$ are the same in the game $T_k$ as in $W_1$. The games $T_k$ and $W_1$ all allow the same Nim moves, so the location of the $g$-positions could differ only if there were positions containing $0,1, \dots, g-1$ positions that were reachable via diagonal move in one game but not the other. The location of the $g$-positions is bounded in a region described in Lemma \ref{glocation}. We show that all positions in this region, which depends on $g$, can be reached via the diagonal move in all games $T_k$ where $0 \leq g \leq k$. Thus the $g$-positions will be the same for these games. For $a \leq b$ we consider the expression $ \left| \lfloor \frac{b-s}{a-s} \rfloor - \lfloor \frac{b}{a} \rfloor \right|$, and for $a>b$, we consider the expression $ \left| \lfloor \frac{a-s}{b-s} \rfloor - \lfloor \frac{a}{b} \rfloor \right|$, so the location of $g-$positions is symmetrical across the line $y=x$. Thus we discuss only the case where $a \leq b$. 

We condition on a position's location with respect to the line $y=2x$, showing that in each case, that position has unrestricted diagonal access to all relevant $g$-positions. (See Figure \ref{tkplot} for a visualization of the difference regions.) A $g$-position with $b=0$ is not accessible via diagonal from any position in any game $T_k$ or $W_1$. Consider a position $(a,a+j)$, and the corresponding $ \left| \lfloor \frac{a+j-s}{a-s} \rfloor - \lfloor \frac{a+j}{a} \rfloor \right|=\left| \lfloor 1+\frac{j}{a-s} \rfloor - \lfloor 1+\frac{j}{a} \rfloor  \right|$. Since $ 1 \leq \lfloor 1+\frac{j}{a-s} \rfloor \leq j+1$ and $1 \leq \lfloor 1+\frac{j}{a} \rfloor  \leq j+1$, we have  $\left| \lfloor \frac{a+j-s}{a-s} \rfloor - \lfloor \frac{a+j}{a} \rfloor \right| \leq j$, so movement along the $y=x, y=x+1, \dots, y=x+g$ diagonals for $0 \leq g \leq k$ is unrestricted in the game $T_k$, so a $g$-position on or below the line $y=x+g$ is accessible to any position on its diagonal in those games. By Lemma $\ref{glocation}$, all that remains is to show that for other diagonals, diagonal movement as far as the line $y=(g+1)x$ is unrestricted.  

First, we show that all positions to the right of $y=2x$ can reach to $y=(g+1)x$. Let $(a,a+j)$ be a position on the diagonal $y=x+j$ to the right of $y=2x$. Then $a+j<2a$, so $j<a$. We have $ \left| \lfloor \frac{a+j-s}{a-s} \rfloor - \lfloor \frac{a+j}{a} \rfloor \right|= \left|  \lfloor \frac{j}{a-s} \rfloor - \lfloor \frac{j}{a} \rfloor  \right| = \lfloor \frac{j}{a-s} \rfloor.$ So, for $a-s>0$, if $a-s \ge \frac{j}{k}$, then $\lfloor \frac{j}{a-s} \rfloor \leq \frac{j}{a-s} \leq k$, and a diagonal move from $(a,a+j)$ to $(a-s, a-s+j)$ is legal in $T_k$. So for all $(a,a+j)$ to the right of $y=2x$, it is legal to move as far on the diagonal $y=x+j$ as $x=\frac{j}{k}$. That is, it is legal to move as far as the point $(\frac{j}{k}, \frac{j}{k}+j)=(\frac{j}{k}, \frac{(k+1)j}{k})$, or the line $y=(k+1)x$. So for $0 \leq g \leq k$, a move is legal as far as to the line $y=(g+1)x$. 

Second, we show that all positions to the left of or on $y=2x$ (and the right of $y=(g+1)x$) can reach to $y=(g+1)x$. 
Let $(a,a+j)$ be a position on the diagonal $y=x+j$ on or to the left of $y=2x$. Then $a+j \ge 2a$, so $j \ge a$. We have $
 \left| \lfloor \frac{a+j-s}{a-s} \rfloor - \lfloor \frac{a+j}{a} \rfloor \right|= \left|  \lfloor \frac{j}{a-s} \rfloor - \lfloor \frac{j}{a} \rfloor  \right|= \lfloor \frac{j}{a-s} \rfloor - \lfloor \frac{j}{a} \rfloor$. We seek to satisfy $\lfloor \frac{j}{a-s} \rfloor - \lfloor \frac{j}{a} \rfloor \leq k$. Since $ \lfloor \frac{j}{a} \rfloor \ge 1$, a stronger condition than the above is $\lfloor \frac{j}{a-s} \rfloor \leq k+1$, and stronger than this is $ \frac{j}{a-s} \leq k+1$. So, for $a-s>0$, if $a-s \ge \frac{j}{k+1}$, then a diagonal move from $(a,a+j)$ to $(a-s, a-s+j)$ is legal in $T_k$. Thus for all $(a,a+j)$ to the left of or on $y=2x$, it is legal to move as far on the diagonal $y=x+j$ as $x=\frac{j}{k+1}$, that is, to the point $(\frac{j}{k+1}, \frac{(k+2)j}{j})$, or the line $y=(k+2)x$. 
This proves the theorem. 
\end{proof}

\section{Acknowledgments}

This research was conducted as part of the 2015 Duluth REU program and was supported by NSF grant 1358695,  NSA grant H98230-13-1-0273, and University of Minnesota Duluth.  I would like to thank the participants, advisers Levent Alpoge and Ben Gunby, program director Joe Gallian, and visitors Tim Chow, Albert Gu, Adam Hesterberg, and Alex Lombardi of the Duluth REU for many helpful discussions.

\ \\

\end{document}